\providecommand{\U}[1]{\protect \rule{.1in}{.1in}}
\newtheorem{theorem}{Theorem}[section]
\newtheorem{corollary}[theorem]{Corollary}
\newtheorem{definition}[theorem]{Definition}
\newtheorem{example}[theorem]{Example}
\newtheorem{lemma}[theorem]{Lemma}
\newtheorem{proposition}[theorem]{Proposition}
\newtheorem{remark}[theorem]{Remark}
\newenvironment{proof}[1][Proof]{\noindent \textbf{#1.} }{\  \rule{0.5em}{0.5em}}
\begin{document}

\title{A New Central Limit Theorem under Sublinear Expectations}
\author{Shige PENG\thanks{ The author
thanks the partial support from The National Basic Research Program
of China (973 Program) grant No. 2007CB814900 (Financial Risk).
}\\Institute of Mathematics\\Shandong University\\250100, Jinan,
China\\peng@sdu.edu.cn}
\date{version: March 18, 2008}
\maketitle

\begin{abstract}
We describe a new framework of a sublinear expectation space and the related
notions and results of distributions, independence. A new notion of
G-distributions is introduced which generalizes our G-normal-distribution in
the sense that mean-uncertainty can be also described. W present our new
result of central limit theorem under sublinear expectation. This theorem can
be also regarded as a generalization of the law of large number in the case of mean-uncertainty.

\end{abstract}

\section{Introduction}

The law of large numbers (LLN) and central limit theorem (CLT) are long and
widely been known as two fundamental results in the theory of probability and
statistics. A striking consequence of CLT is that accumulated independent and
identically distributed random variables tends to a normal distributed random
variable whatever is the original distribution. It is a very useful tool in
finance since many typical financial positions are accumulations of a large
number of small and independent risk positions. But CLT only holds in cases of
model certainty. In this paper we are interested in CLT with mean and variance-uncertainty.

Recently problems of model uncertainties in statistics, measures of risk and
superhedging in finance motivated us to introduce, in \cite{Peng2006a} and
\cite{Peng2006b} (see also \cite{Peng2004}, \cite{Peng2005} and references
herein), a new notion of sublinear expectation, called \textquotedblleft%
$G$-expectation\textquotedblright, and the related \textquotedblleft$G$-normal
distribution\textquotedblright \ (see Def. \ref{Def-Gnormal}) from which we
were able to define $G$-Brownian motion as well as the corresponding
stochastic calculus. The notion of $G$-normal distribution plays the same
important rule in the theory of sublinear expectation as that of normal
distribution in the classic probability theory. It is then natural and
interesting to ask if we have the corresponding LLN and CLT under a sublinear
expectation and, in particular, if the corresponding limit distribution of the
CLT is a $G$-normal distribution. This paper gives an affirmative answer. We
will prove that the accumulated risk positions converge `in law' to the
corresponding $G$-normal distribution, which is a distribution under sublinear
expectation. In a special case where the mean and variance uncertainty becomes
zero, the $G$-normal distribution becomes the classical normal distribution.
Technically we introduce a new method to prove a CLT under a sublinear
expectation space. This proof of our CLT is short since we borrow a deep
interior estimate of fully nonlinear PDE in \cite{Krylov}. \ The assumptions
of our CLT can be still improved.

This paper is organized as follows: in Section 2 we describe the framework of
a sublinear expectation space. The basic notions and results of distributions,
independence and the related product space of sublinear will be presented in
Section 3. In Section 4 we introduce a new notion of $G$-distributions which
generalizes our $G$-normal-distribution in the sense that mean-uncertainty can
be also described. Finally, in Section 5, we present our main result of CLT
under sublinear expectation. For reader's convenience we present some basic
results of viscosity solutions in the Appendix.

This paper is a new and generalized version of \cite{Peng2007a} in which only
variance uncertainty was considered for random variables instead random random
vectors. Our new CLT theorem can be applied to the case where both
mean-uncertainty and variance-uncertainty cannot be negligible. This theorem
can be also regarded as a new generalization of LLN. We refer to
\cite{Marinacci} and \cite{Marinacci1} for the developments of LLN with
non-additive probability measures.

\section{Basic settings}

For a given positive integer $n$ we will denote by $\left \langle
x,y\right \rangle $ the scalar product of $x$, $y\in \mathbb{R}^{n}$ and by
$\left \vert x\right \vert =(x,x)^{1/2}$ the Euclidean norm of $x$. We denote by
$\mathbb{S}(n)$ the collection of $n\times n$ symmetric matrices and by
$\mathbb{S}_{+}(n)$ the non negative elements in $\mathbb{S}(n)$. We observe
that $\mathbb{S}(n)$ is an Euclidean space with the scalar product
$\left \langle P,Q\right \rangle =tr[PQ]$.

In this paper we will consider the following type of spaces of sublinear
expectations: Let $\Omega$ be a given set and let $\mathcal{H}$ be a linear
space of real functions defined on $\Omega$ such that if $X_{1},\cdots
,X_{n}\in \mathcal{H}$ then $\varphi(X_{1},\cdots,X_{n})\in \mathcal{H}$ for
each $\varphi \in C_{l.Lip}(\mathbb{R}^{n})$ where $C_{l.Lip}(\mathbb{R}^{n})$
denotes the linear space of (local Lipschitz) functions $\varphi$ satisfying
\begin{align*}
|\varphi(x)-\varphi(y)|  &  \leq C(1+|x|^{m}+|y|^{m})|x-y|,\  \  \forall
x,y\in \mathbb{R}^{n}\text{, \ }\\
\  &  \text{for some }C>0\text{, }m\in \mathbb{N}\text{ depending on }\varphi.
\end{align*}
$\mathcal{H}$ is considered as a space of \textquotedblleft random
variables\textquotedblright. In this case we denote $X=(X_{1},\cdots,X_{n}%
)\in \mathcal{H}^{n}$.

\begin{remark}
In particular, if $X,Y\in \mathcal{H}$, then $|X|$, $X^{m}\in \mathcal{H}$ are
in $\mathcal{H}$. More generally $\varphi(X)\psi(Y)$ is still in $\mathcal{H}$
if $\varphi,\psi \in C_{l.Lip}(\mathbb{R})$.
\end{remark}

Here we use $C_{l.Lip}(\mathbb{R}^{n})$ in our framework only for some
convenience of technicalities. In fact our essential requirement is that
$\mathcal{H}$ contains all constants and, moreover, $X\in \mathcal{H}$ implies
$\left \vert X\right \vert \in \mathcal{H}$. In general $C_{l.Lip}(\mathbb{R}%
^{n})$ can be replaced by the following spaces of functions defined on
$\mathbb{R}^{n}$.

\begin{itemize}
\item { $\mathbb{L}^{\infty}(\mathbb{R}^{n})$: the space bounded
Borel-measurable functions; }

\item { $C_{b}(\mathbb{R}^{n})$: the space of bounded and continuous
functions; }

\item {$C_{b}^{k}(\mathbb{R}^{n})$: the space of bounded and $k$-time
continuously differentiable functions with bounded derivatives of all orders
less than or equal to $k$;}

\item { $C_{unif}(\mathbb{R}^{n})$: the space of bounded and uniformly
continuous functions; }

\item { $C_{b.Lip}(\mathbb{R}^{n})$: the space of bounded and Lipschitz
continuous functions.}
\end{itemize}

\begin{definition}
{\label{Def-1} { A \textbf{sublinear expectation }$\mathbb{\hat{E}}$ on
$\mathcal{H}$ is a functional $\mathbb{\hat{E}}:\mathcal{H}\mapsto \mathbb{R}$
satisfying the following properties: for all $X,Y\in \mathcal{H}$, we
have\newline \  \  \newline \textbf{(a) Monotonicity:}
\  \  \  \  \  \  \  \  \  \  \  \  \ If $X\geq Y$ then $\mathbb{\hat{E}}[X]\geq
\mathbb{\hat{E}}[Y].$\newline \textbf{(b) Constant preserving: \  \ }%
\  \ $\mathbb{\hat{E}}[c]=c$.\newline \textbf{(c)} \textbf{Sub-additivity:
\  \  \  \ }}}\  \  \  \  \  \  \  \ $\mathbb{\hat{E}}[X]-\mathbb{\hat{E}}%
[Y]\leq \mathbb{\hat{E}}[X-Y].$\newline{{\textbf{(d) Positive homogeneity: }
\ $\mathbb{\hat{E}}[\lambda X]=\lambda \mathbb{\hat{E}}[X]$,$\  \  \forall
\lambda \geq0$.\newline}}\newline(In many situation {{\textbf{(c) }}}is also
called property of self--domination). The triple $(\Omega,\mathcal{H}%
,\mathbb{\hat{E}}\mathbb{)}$ is called a \textbf{sublinear expectation space}
(compare with a probability space $(\Omega,\mathcal{F},\mathbb{P})$). If only
\textbf{(c) }and \textbf{(d) }are satisfied $\mathbb{\hat{E}}$ is called a
sublinear functional.
\end{definition}

\begin{remark}
Just as in the framework of a probability space, a sublinear expectation space
can be a completed Banach space under its natural norm $\left \Vert
\cdot \right \Vert =${{$\mathbb{\hat{E}}[\left \vert \cdot \right \vert ]$ (see
\cite{Peng2004}-\cite{Peng2007b}) and by using its natural capacity }}$\hat
{c}(\cdot)$ induced via {{$\mathbb{\hat{E}}[\left \vert \cdot \right \vert ]$
(see \cite{DenMa} and \cite{DHP}). But the results obtained in this paper do
not need the assumption of the space-completion. }}
\end{remark}

\begin{lemma}
\label{le0}\bigskip Let {{$\mathbb{E}$}} be a sublinear functional defined on
$(\Omega,\mathcal{H})$, i.e., \textbf{(c) }and \textbf{(d)} hold for
$\mathbb{E}$. Then there exists a family $\mathcal{Q}$ of linear functional on
$(\Omega,\mathcal{H})$ such that
\[
{{\mathbb{E}}}[X]:=\sup_{E\in \mathcal{Q}}E[X],\  \  \forall E\in \mathcal{Q}.
\]
and such that, for each $X\in \mathcal{H}$, there exists a $E\in \mathcal{Q}$
such that {{$\mathbb{E}$}}$[X]:=E[X]$. If we assume moreover that \textbf{(a)
}holds (resp.\textbf{ (a), (b)} hold) for $\mathbb{E}$, then \textbf{(a) }also
holds \ (resp.\textbf{ (a), (b)} hold) for each $E\in \mathcal{Q}$.
\end{lemma}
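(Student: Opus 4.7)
The statement is a representation of a sublinear functional as a supremum of dominated linear functionals, with the supremum attained at each point. The natural approach is Hahn--Banach.

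My plan is as follows. Fix an arbitrary $X_0\in\mathcal{H}$ and define on the one-dimensional subspace $L_{X_0}=\{\lambda X_0:\lambda\in\mathbb{R}\}$ the linear functional $\ell_{X_0}(\lambda X_0):=\lambda\,\mathbb{E}[X_0]$. I will show $\ell_{X_0}\le\mathbb{E}$ on $L_{X_0}$: for $\lambda\ge 0$ this is an equality by positive homogeneity \textbf{(d)}, while for $\lambda<0$ sub-additivity \textbf{(c)} applied to $0=X_0+(-X_0)$ together with \textbf{(d)} (which forces $\mathbb{E}[0]=0$) gives $-\mathbb{E}[X_0]\le\mathbb{E}[-X_0]$, and multiplying by $|\lambda|$ yields $\lambda\mathbb{E}[X_0]\le\mathbb{E}[\lambda X_0]$. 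Next I invoke the Hahn--Banach theorem, using that $\mathbb{E}$ itself is a sublinear gauge on $\mathcal{H}$ by \textbf{(c)} and \textbf{(d)}, to extend $\ell_{X_0}$ to a linear functional $E_{X_0}$ defined on all of $\mathcal{H}$ and satisfying $E_{X_0}\le\mathbb{E}$ pointwise. By construction $E_{X_0}[X_0]=\mathbb{E}[X_0]$.

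Then I take $\mathcal{Q}:=\{E_{X_0}:X_0\in\mathcal{H}\}$. Every $E\in\mathcal{Q}$ satisfies $E\le\mathbb{E}$, and for each fixed $X\in\mathcal{H}$ the particular element $E_X\in\mathcal{Q}$ attains $E_X[X]=\mathbb{E}[X]$; hence $\mathbb{E}[X]=\sup_{E\in\mathcal{Q}}E[X]$ and the supremum is a maximum.

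Finally I verify inheritance of the extra properties. Assume \textbf{(a)} holds for $\mathbb{E}$. For any $E\in\mathcal{Q}$ and $X\le Y$, we have $X-Y\le 0$, so by \textbf{(a)} and \textbf{(d)}, $\mathbb{E}[X-Y]\le\mathbb{E}[0]=0$; linearity of $E$ and domination give $E[X]-E[Y]=E[X-Y]\le\mathbb{E}[X-Y]\le 0$. If in addition \textbf{(b)} holds, then for any constant $c$ I have $E[c]\le\mathbb{E}[c]=c$ and $-E[c]=E[-c]\le\mathbb{E}[-c]=-c$, forcing $E[c]=c$.

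The proof is essentially a direct application of Hahn--Banach; the only step that requires a small computation is the domination check on the one-dimensional subspace for negative $\lambda$, which is the one place where sub-additivity is used essentially. No serious obstacle is anticipated.
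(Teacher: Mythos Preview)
Your proof is correct and follows essentially the same Hahn--Banach argument as the paper: extend the linear functional $\lambda X_0\mapsto\lambda\,\mathbb{E}[X_0]$ from the line through $X_0$ to all of $\mathcal{H}$, dominated by $\mathbb{E}$, and then check that monotonicity and constant-preservation pass to each dominated linear functional. The only cosmetic difference is that the paper takes $\mathcal{Q}$ to be the family of \emph{all} linear functionals dominated by $\mathbb{E}$, whereas you take only the Hahn--Banach extensions $\{E_{X_0}\}$; your version is in fact slightly more careful in verifying the domination on $L_{X_0}$ for negative $\lambda$.
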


\begin{proof}
Let $\mathcal{Q}$ be the family of all linear functional dominated by
{{$\mathbb{E}$}}, i.e., $E[X]\leq \mathbb{E}[X]$, for all $X\in \mathcal{H}$,
$E\in \mathcal{Q}.$ We first prove that $\mathcal{Q}$ is non empty. For a given
$X\in \mathcal{H}$, we denote $L=\{aX:a\in \mathbb{R}\}$ which is a subspace of
$\mathcal{H}$. We define $I:L\rightarrow \mathbb{R}$ by $I[aX]=a{{\mathbb{E}}%
}[X]$, $\forall a\in \mathbb{R}$, then $I[\cdot]$ forms a linear functional on
$L$ and $I\leq${{$\mathbb{E}$}} on $L$. Since {{$\mathbb{E}$}}$[\cdot]$ is
sub-additive and positively homogeneous, by Hahn-Banach theorem (see e.g.
\cite{Yosida}pp102) there exists a linear functional $E$ on $\mathcal{H}$ such
that $E=I$ on $L$ and $E\leq{{\mathbb{E}}}$ on $\mathcal{H}$. Thus $E$ is a
linear functional dominated by ${{\mathbb{E}}}$ such that ${{\mathbb{E}}%
}[X]:=E[X]$. We now define%
\[
{{\mathbb{E}}}_{\mathcal{Q}}[X]\triangleq \sup_{E\in \mathcal{Q}}E[X].
\]
It is clear that $\mathbb{E}_{\mathcal{Q}}\mathbb{=}{{\mathbb{E}}}$.

If \textbf{(a) }holds for $\mathbb{E}$, then for each non negative element
$X\in$ $\mathcal{H}$, for each $E\in \mathcal{Q}$, $E[X]=-E[-X]\geq
-\mathbb{E}[-X]\geq0$, thus \textbf{(a)} also holds for $E$. If moreover
\textbf{(b) }holds for $\mathbb{E}$, then for each $c\in \mathbb{R}$,
$-E[c]=E[-c]\leq \mathbb{E}[-c]=-c$ and $E[c]\leq \mathbb{E}[c]=c$, we get
$E[c]=c$. The proof is complete.
\end{proof}

\begin{example}
{ For some $\varphi \in C_{l.Lip}(\mathbb{R})$, }$\xi \in \mathcal{H}$, let
{$\varphi(\xi)$ be a gain value favorable to a banker of a game. The banker
can choose among a set of distribution }${\{F(\theta,A)}\}_{A\in
\mathcal{B}(\mathbb{R}),\theta \in \Theta}${ of a random variable }$\xi${. In
this situation the robust expectation of the risk for a gamblers\ opposite to
the banker is:}
\[
\hat{\mathbb{E}}[\varphi(\xi)]:=\sup_{\theta \in \Theta}\int_{\mathbb{R}}%
\varphi(x)F(\theta,dx).
\]

\end{example}

\section{Distributions, independence and product spaces}

We now consider the notion of the distributions of random variables under
sublinear expectations. Let $X=(X_{1},\cdots,X_{n})$ be a given $n$%
-dimensional random vector on a sublinear expectation space $(\Omega
_{1},\mathcal{H}_{1},\mathbb{\hat{E}})$. We define a functional on
$C_{l.Lip}(\mathbb{R}^{n})$ by
\begin{equation}
\mathbb{\hat{F}}_{X}[\varphi]:=\mathbb{\hat{E}}[\varphi(X)]:\varphi \in
C_{l.Lip}(\mathbb{R}^{n})\mapsto(-\infty,\infty). \label{X-Distr}%
\end{equation}
The triple $(\mathbb{R}^{n},C_{l.Lip}(\mathbb{R}^{n}),\mathbb{\hat{F}}%
_{X}[\cdot])$ forms a sublinear expectation space. $\mathbb{\hat{F}}_{X}$ is
called the distribution of $X$.

\begin{definition}
Let $X_{1}$ and $X_{2}$ be two $n$--dimensional random vectors defined
respectively in {sublinear expectation spaces }$(\Omega_{1},\mathcal{H}%
_{1},\mathbb{\hat{E}}_{1})${ and }$(\Omega_{2},\mathcal{H}_{2},\mathbb{\hat
{E}}_{2})$. They are called identically distributed, denoted by $X_{1}%
\overset{d}{=}X_{2}$, if
\[
\mathbb{\hat{E}}_{1}[\varphi(X_{1})]=\mathbb{\hat{E}}_{2}[\varphi
(X_{2})],\  \  \  \forall \varphi \in C_{l.Lip}(\mathbb{R}^{n}).
\]
It is clear that $X_{1}\overset{d}{=}X_{2}$ if and only if their distributions coincide.
\end{definition}

\begin{remark}
If the distribution $\mathbb{\hat{F}}_{X}$ of $X\in \mathcal{H}$ is not a
linear expectation, then $X$ is said to have distributional uncertainty. The
distribution of $X$ has the following four typical parameters:
\[
\bar{\mu}:=\hat{\mathbb{E}}[X],\  \  \underline{\mu}:=-\mathbb{\hat{E}%
}[-X],\  \  \  \  \  \  \  \  \bar{\sigma}^{2}:=\hat{\mathbb{E}}[X^{2}],\  \  \underline
{\sigma}^{2}:=-\hat{\mathbb{E}}[-X^{2}].\  \
\]
The subsets $[\underline{\mu},\bar{\mu}]$ and $[\underline{\sigma}^{2}%
,\bar{\sigma}^{2}]$ characterize the mean-uncertainty and the
variance-uncertainty of $X$. The problem of zero-mean uncertainty have been
studied in [P3], [P4]. In this paper the mean uncertainty will be in our consideration.
\end{remark}

The following simple property is very useful in our sublinear analysis.

\begin{proposition}
{ \label{Prop-X+Y}Let $X,Y\in \mathcal{H}$ be such that $\mathbb{\hat{E}%
}[Y]=-\mathbb{\hat{E}}[-Y]$, i.e. }$Y$ has no mean uncertainty.{ Then we have%
\[
\mathbb{\hat{E}}[X+Y]=\mathbb{\hat{E}}[X]+\mathbb{\hat{E}}[Y].
\]
In particular, if $\mathbb{\hat{E}}[Y]=\mathbb{\hat{E}}[-Y]=0$, then
$\mathbb{\hat{E}}[X+Y]=\mathbb{\hat{E}}[X]$. }
\end{proposition}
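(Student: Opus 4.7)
The plan is to establish the desired equality by a standard sandwich argument that uses only subadditivity (property (c)) together with the stated mean-certainty hypothesis on $Y$.

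First I would derive the easy direction. By sub-additivity applied to $X+Y$ written as a sum of $X$ and $Y$,
\[
\mathbb{\hat{E}}[X+Y] \leq \mathbb{\hat{E}}[X] + \mathbb{\hat{E}}[Y].
\]
This uses none of the assumption on $Y$ and holds for arbitrary $X,Y \in \mathcal{H}$.

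For the reverse direction I would again invoke sub-additivity, but applied to the decomposition $X = (X+Y) + (-Y)$:
\[
\mathbb{\hat{E}}[X] = \mathbb{\hat{E}}[(X+Y) + (-Y)] \leq \mathbb{\hat{E}}[X+Y] + \mathbb{\hat{E}}[-Y].
\]
At this point the hypothesis $\mathbb{\hat{E}}[Y] = -\mathbb{\hat{E}}[-Y]$ enters: it lets me replace $\mathbb{\hat{E}}[-Y]$ by $-\mathbb{\hat{E}}[Y]$, yielding
\[
\mathbb{\hat{E}}[X] + \mathbb{\hat{E}}[Y] \leq \mathbb{\hat{E}}[X+Y].
\]
Combining the two inequalities gives the equality $\mathbb{\hat{E}}[X+Y] = \mathbb{\hat{E}}[X] + \mathbb{\hat{E}}[Y]$. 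The special case $\mathbb{\hat{E}}[Y] = \mathbb{\hat{E}}[-Y] = 0$ follows immediately by substitution.

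There is no real obstacle here — the proof is a two-line consequence of property (c); the only conceptual point is recognising that one should apply sub-additivity twice, once directly to $X+Y$ and once to the cancelling decomposition $X = (X+Y) + (-Y)$, so that the hypothesis can be used to turn the second inequality around.
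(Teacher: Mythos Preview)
Your proof is correct and is exactly the argument the paper gives: sub-additivity yields $\mathbb{\hat{E}}[X+Y]\leq\mathbb{\hat{E}}[X]+\mathbb{\hat{E}}[Y]$, and applying sub-additivity to $X=(X+Y)+(-Y)$ gives $\mathbb{\hat{E}}[X+Y]\geq\mathbb{\hat{E}}[X]-\mathbb{\hat{E}}[-Y]=\mathbb{\hat{E}}[X]+\mathbb{\hat{E}}[Y]$. The paper states this in one line, but the content is identical.
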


\begin{proof}
{ It is simply because we have $\mathbb{\hat{E}}[X+Y]\leq \mathbb{\hat{E}%
}[X]+\mathbb{\hat{E}}[Y]$ and
\[
\mathbb{\hat{E}}[X+Y]\geq \mathbb{\hat{E}}[X]-\mathbb{\hat{E}}[-Y]=\mathbb{\hat
{E}}[X]+\mathbb{\hat{E}}[Y]\text{.}%
\]
}
\end{proof}

The following notion of independence plays a key role:

\begin{definition}
In a sublinear expectation space $(\Omega,\mathcal{H},\mathbb{\hat{E}})$ a
random vector $Y=(Y_{1},\cdots,Y_{n})$, $Y_{i}\in \mathcal{H}$ is said to be
independent to another random vector $X=(X_{1},\cdots,X_{m})$, $X_{i}%
\in \mathcal{H}$ under $\mathbb{\hat{E}}[\cdot]$ if for each test function
$\varphi \in C_{l.Lip}(\mathbb{R}^{m}\times \mathbb{R}^{n})$ we have
\[
\mathbb{\hat{E}}[\varphi(X,Y)]=\mathbb{\hat{E}}[\mathbb{\hat{E}}%
[\varphi(x,Y)]_{x=X}].
\]

\end{definition}

\begin{remark}
{In the case of linear expectation, this notion of independence is just the
classical one. It is important to note that under sublinear expectations the
condition \textquotedblleft$Y$ is independent to $X$\textquotedblright \ does
not implies automatically that \textquotedblleft$X$ is independent to
$Y$\textquotedblright. }
\end{remark}

\begin{example}
We consider a case where $X,Y\in \mathcal{H}$ are identically distributed and
$\mathbb{\hat{E}}[X]=\mathbb{\hat{E}}[-X]=0$ but $\bar{\sigma}^{2}%
=\mathbb{\hat{E}}[X^{2}]>\underline{\sigma}^{2}=-\mathbb{\hat{E}}[-X^{2}]$. We
also assume that $\mathbb{\hat{E}}[|X|]=\mathbb{\hat{E}}[X^{+}+X^{-}]>0$, thus
$\mathbb{\hat{E}}[X^{+}]=\frac{1}{2}\mathbb{\hat{E}}[|X|+X]=$$\frac{1}%
{2}\mathbb{\hat{E}}[|X|]>0$. In the case where $Y$ is independent to $X$, we
have%
\[
\mathbb{\hat{E}}[XY^{2}]=\mathbb{\hat{E}}[X^{+}\bar{\sigma}^{2}-X^{-}%
\underline{\sigma}^{2}]=(\bar{\sigma}^{2}-\underline{\sigma}^{2}%
)\mathbb{\hat{E}}[X^{+}]>0.
\]
But if $X$ is independent to $Y$ we have%
\[
\mathbb{\hat{E}}[XY^{2}]=0.
\]

\end{example}

The independence property of two random vectors $X,Y$ involves only the joint
distribution of $(X,Y)$. The following construction tells us how to construct
random vectors with given sublinear distributions and with joint independence.

\begin{definition}
\label{Def-2-10}Let $(\Omega_{i},\mathcal{H}_{i},$ $\mathbb{\hat{E}}_{i})$,
$i=1,2$ be two sublinear expectation spaces. We denote by
\begin{align*}
\mathcal{H}_{1}\times \mathcal{H}_{2}  &  :=\{Z(\omega_{1},\omega_{2}%
)=\varphi(X(\omega_{1}),Y(\omega_{2})):(\omega_{1},\omega_{2})\in \Omega
_{1}\times \Omega_{2},\  \\
\  \  &  \ (X,Y)\in(\mathcal{H}_{1})^{m}\times(\mathcal{H}_{2})^{n}%
,\  \varphi \in C_{l.Lip}(\mathbb{R}^{m}\times \mathbb{R}^{n}),\ m,n=1,2,\cdots
\},\  \
\end{align*}
and, for each random variable of the above form $Z(\omega_{1},\omega
_{2})=\varphi(X(\omega_{1}),Y(\omega_{2}))$,
\[
(\mathbb{\hat{E}}_{1}\times \mathbb{\hat{E}}_{2})\mathbb{[}Z]:=\mathbb{\hat{E}%
}_{1}\mathbb{[}\bar{\varphi}(X)],\  \  \text{where }\bar{\varphi}%
(x):=\mathbb{\hat{E}}_{2}\mathbb{[}\varphi(x,Y)],\ x\in \mathbb{R}^{m}.
\]
It is easy to check that the triple $(\Omega_{1}\times \Omega_{2}%
,\mathcal{H}_{1}\times \mathcal{H}_{2},\mathbb{\hat{E}}_{1}\times
\mathbb{\hat{E}}_{2}\mathbb{)}$ forms a sunlinear expectation space. We call
it the product space of sublinear expectation of $(\Omega_{1},\mathcal{H}%
_{1},$ $\mathbb{\hat{E}}_{1})$ and $(\Omega_{2},\mathcal{H}_{2},$
$\mathbb{\hat{E}}_{2})$. \ In this way we can define the product space of
sublinear expectation
\[
(%
{\displaystyle \prod \limits_{i=1}^{n}}
\Omega_{i},%
{\displaystyle \prod \limits_{i=1}^{n}}
\mathcal{H}_{i},%
{\displaystyle \prod \limits_{i=1}^{n}}
\mathbb{\hat{E}}_{i})
\]
of any given sublinear expectation spaces $(\Omega_{i},\mathcal{H}_{i},$
$\mathbb{\hat{E}}_{i})$, $i=1,2,\cdots,n$. In particular, when $(\Omega
_{i},\mathcal{H}_{i},$ $\mathbb{\hat{E}}_{i})=(\Omega_{1},\mathcal{H}_{1},$
$\mathbb{\hat{E}}_{1})$ we have the product space of the form $(\Omega
_{1}^{\otimes n},\mathcal{H}_{1}^{\otimes n},$ $\mathbb{\hat{E}}_{1}^{\otimes
n})$.
\end{definition}

The following property is easy to check.

\begin{proposition}
\label{Prop-2-10} \label{Prop12}Let $X_{i}$ be $n_{i}$-dimensional random
vectors in sublinear expectation spaces $(\Omega_{i},\mathcal{H}_{i},$
$\mathbb{\hat{E}}_{i})$, for $i=1,\cdots,n$, respectively. We denote
\[
Y_{i}(\omega_{1},\cdots,\omega_{n}):=X_{i}(\omega_{i}),\  \ i=1,\cdots,n.
\]
Then $Y_{i}$, $i=1,\cdots,n$ are random variables in the product space of
sublinear expectation $(%
{\displaystyle \prod \limits_{i=1}^{n}}
\Omega_{i},%
{\displaystyle \prod \limits_{i=1}^{n}}
\mathcal{H}_{i},%
{\displaystyle \prod \limits_{i=1}^{n}}
\mathbb{\hat{E}}_{i})$. Moreover we have $Y_{i}\overset{d}{=}X_{i}$ and
$Y_{i+1}$ is independent to $(Y_{1},\cdots,Y_{i})$, for each $i$.

Moreover, if $(\Omega_{i},\mathcal{H}_{i},\mathbb{\hat{E}}_{i})=(\Omega
_{1},\mathcal{H}_{1},\mathbb{\hat{E}}_{1})$ and $X_{i}=X_{1}$, for all $i$,
then we also have $Y_{i}\overset{d}{=}Y_{1}$. In this case $Y_{i}$ ia called
independent copies of $Y_{1}$ for $i=2,\cdots,n$.
\end{proposition}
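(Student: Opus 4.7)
The plan is to read off all three assertions directly from Definition~\ref{Def-2-10}, driven by the recursive structure of the product expectation together with the constant-preserving axiom (b).

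First I would verify that each $Y_i$ belongs to $\prod_{j=1}^n \mathcal{H}_j$. Writing $Y_i(\omega_1,\ldots,\omega_n)=\varphi_i(X_1(\omega_1),\ldots,X_n(\omega_n))$ with $\varphi_i$ the $i$-th coordinate projection (which is trivially in $C_{l.Lip}$), this is immediate from the definition of the product space.

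Next I would prove $Y_i \overset{d}{=} X_i$. Interpreting $\prod_{j=1}^n \hat{\mathbb{E}}_j$ as iterations of the two-fold construction in Definition~\ref{Def-2-10}, evaluation of $(\prod_j \hat{\mathbb{E}}_j)[\psi(Y_i)]$ for $\psi\in C_{l.Lip}$ proceeds from the innermost slot $\hat{\mathbb{E}}_n$ outward. Since $\psi(Y_i)$ depends only on $\omega_i$, every $\hat{\mathbb{E}}_j$ with $j\ne i$ acts on a function that is constant in its own coordinate, and by (b) it passes through unchanged; only $\hat{\mathbb{E}}_i[\psi(X_i)]$ remains.

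For the independence claim I would fix $i$ and a test function $\varphi\in C_{l.Lip}$ and run the same iterated evaluation on $\varphi(Y_1,\ldots,Y_i,Y_{i+1})$. Layers $n,\ldots,i+2$ disappear under (b); layer $i+1$ produces $\bar\varphi(x_1,\ldots,x_i):=\hat{\mathbb{E}}_{i+1}[\varphi(x_1,\ldots,x_i,X_{i+1})]$, which by the previous paragraph equals $(\prod_j \hat{\mathbb{E}}_j)[\varphi(x_1,\ldots,x_i,Y_{i+1})]$; layers $i,\ldots,1$ then collapse $\bar\varphi(X_1,\ldots,X_i)$ to $(\prod_j \hat{\mathbb{E}}_j)[\bar\varphi(Y_1,\ldots,Y_i)]$, which is exactly the identity requiring $Y_{i+1}$ to be independent to $(Y_1,\ldots,Y_i)$. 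The last sentence of the proposition, $Y_i \overset{d}{=} Y_1$ in the common-space case, is then immediate since under that hypothesis $Y_i \overset{d}{=} X_i = X_1 \overset{d}{=} Y_1$.

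The genuine obstacle is not logical difficulty but bookkeeping. Definition~\ref{Def-2-10} spells out the product only for two factors, so to make the iteration used above rigorous I would first prove by induction on $n$ that the $n$-fold product is well-defined and that consecutive factors can be regrouped (so that, at each stage, I can legally peel off either the innermost or the outermost slot). Axiom (b) again does the work: inserting a dummy coordinate changes no value, so associativity reduces to a routine check and the independence argument above can be carried out without ambiguity.
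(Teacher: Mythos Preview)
Your proposal is correct. The paper itself gives no proof of this proposition, merely prefacing it with ``The following property is easy to check''; your argument---unwinding the iterated product expectation from the innermost factor outward and invoking constant preservation (b) to drop the irrelevant coordinates---is exactly the routine verification that phrase invites, and your remark about first making the $n$-fold product associative via an induction on $n$ is the one honest bookkeeping point that the paper suppresses.
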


{The situation \textquotedblleft$Y$ is independent to $X$\textquotedblright%
\ often appears when $Y$ occurs after $X$, thus a very robust sublinear
expectation should take the information of $X$ into account. We consider the
following example: Let }$Y=\psi(\xi,\theta)$, $\psi \in C_{b}(\mathbb{R}^{2})$,
where $\xi$ and $X$ are two bounded random variables in a classical
probability space $(\Omega,\mathcal{F},P)$ and $\theta$ is a completely
unknown parameter valued in a given interval $[a,b]$. We assume that $\xi$ is
independent of $X$ under $P$ in the classical sense. On the space
$(\Omega,\mathcal{H)}$ with $\mathcal{H}:=\{ \varphi(X,Y):\varphi \in
C_{l.Lip}(\mathbb{R}^{2})\}$, we can define the following three robust
sublinear expectations:
\begin{align*}
\mathbb{E}_{1}[\varphi(X,Y)] &  =\sup_{\theta \in \lbrack a,b]}E_{P}%
[\varphi(X,\psi(\xi,\theta)]\text{,\  \ }\mathbb{E}_{2}[\varphi(X,Y)]=E_{P}%
[\sup_{\theta \in \lbrack a,b]}\varphi(X,\psi(\xi,\theta)],\  \\
\mathbb{E}_{3}[\varphi(X,Y)] &  =E_{P}[\{ \sup_{\theta \in \lbrack a,b]}%
E_{P}[\varphi(x,\psi(\xi,\theta)]\}_{x=X}].\text{\ }%
\end{align*}
But it is seen that only under the sublinear expectation $\mathbb{E}_{3}$ that
$Y$ is independent to $X$.

\begin{remark}
It is possible that the above parameter $\theta$ is in fact a function of $X$
and $\xi$: $\theta=\Theta(X,\xi)$ where $\Theta$ is a completely unknown
function valued in $[a,b]$, thus $Y=\psi(\xi,\Theta(X,\xi))$ is dependent to
$X$ in the classical sense. But since $\Theta$ is a completely unknown
function a robust expectation is $\mathbb{E}_{3}$.
\end{remark}

\begin{definition}
A sequence of $d$-dimensional random vectors $\left \{  \eta_{i}\right \}
_{i=1}^{\infty}$ in $\mathcal{H}$ is said to converge in distribution under
$\mathbb{\hat{E}}$ if for each $\varphi \in C_{b}(\mathbb{R}^{n})$ the sequence
$\left \{  \mathbb{\hat{E}}[\varphi(\eta_{i})]\right \}  _{i=1}^{\infty}$ converges.
\end{definition}

\section{$G$-distributed random variables}

Given a pair of $d$-dimensional random vectors $(X,Y)$ in a sublinear
expectation space $(\Omega,\mathcal{H},\mathbb{\hat{E}})$, we can define a
function
\begin{equation}
G(p,A):=\mathbb{\hat{E}}[\frac{1}{2}\left \langle AX,X\right \rangle
+\left \langle p,Y\right \rangle ],\  \  \ (p,A)\in \mathbb{S}(d)\times
\mathbb{R}^{d} \label{G-func}%
\end{equation}
It is easy to check that $G:\mathbb{R}^{d}\times \mathbb{S}(d)\mapsto
\mathbb{R}$ is a sublinear function monotonic in $A\in \mathbb{S}(d)$ in the
following sense: For each $p,\bar{p}\in \mathbb{R}^{d}$ and $A,\bar{A}
\in \mathbb{S}(d)$%

\begin{equation}
\left \{
\begin{array}
[c]{rl}%
G(p+\bar{p},A+\bar{A}) & \leq G(p,A)+G(\bar{p},\bar{A}),\\
G(\lambda p,\lambda A) & =\lambda G(p,A),\  \  \forall \lambda \geq0,\\
G(p,A) & \geq G(p,\bar{A}),\ if\  \ A\geq \bar{A}.
\end{array}
\right.  \label{mono}%
\end{equation}
$G$ is also a continuous function.

The following property is classic. One can also check it by using Lemma
\ref{le0}.

\begin{proposition}
\label{Sublinear}Let $G:\mathbb{R}^{d}\times \mathbb{S}(d)\mapsto \mathbb{R}$ be
a sublinear function monotonic in $A\in \mathbb{S}(d)$ in the sense of
(\ref{mono}) and continuous in $(0,0)$. Then there exists a bounded subset
$\Theta \in \mathbb{R}^{d}\times \mathbb{R}^{d\times d}$ such that
\[
G(p,A)=\sup_{(q,Q)\in \Theta}[\frac{1}{2}tr[AQQ^{T}]+\left \langle
p,q\right \rangle ],\  \  \  \forall(p,A)\in \mathbb{R}^{d}\times \mathbb{S}(d).
\]

\end{proposition}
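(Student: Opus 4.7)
The plan is to obtain the representation by a Hahn--Banach / duality argument of the kind used in the proof of Lemma \ref{le0}, and then to use the two structural hypotheses on $G$ -- monotonicity in $A$ and continuity at the origin -- to refine the dual set of pairs into a bounded family of the form $(q, Q)$, with $Q$ playing the role of a square root of a positive semi-definite matrix.

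First I would view $G$ as a sublinear functional on the finite-dimensional Euclidean vector space $\mathcal{V} := \mathbb{R}^{d} \times \mathbb{S}(d)$ with inner product $\langle (p,A),(q,B)\rangle := \langle p,q\rangle + \mathrm{tr}[AB]$. By the Hahn--Banach argument in the proof of Lemma \ref{le0} (adapted from spaces of random variables to an arbitrary real vector space), there is a family $\mathcal{Q}$ of linear functionals dominated by $G$ with $G = \sup_{E\in\mathcal{Q}} E$. Because $\mathcal{V}$ is Euclidean, every such $E$ corresponds to a unique pair $(q,B) \in \mathbb{R}^{d} \times \mathbb{S}(d)$. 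Setting
$$\Theta_{0} := \{(q,B)\in\mathbb{R}^{d}\times\mathbb{S}(d) : \langle p,q\rangle + \mathrm{tr}[AB] \le G(p,A)\ \text{for all}\ (p,A)\},$$
I obtain $G(p,A) = \sup_{(q,B)\in\Theta_{0}} [\langle p,q\rangle + \mathrm{tr}[AB]]$.

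The next step, which I expect to carry the real content, is to show that $B \ge 0$ for every $(q,B) \in \Theta_{0}$, and this is where the monotonicity hypothesis (\ref{mono}) enters. If $B$ had a negative eigenvalue $\lambda$ with unit eigenvector $v$, I would test the domination inequality at $(p,A) = (0, -\mu vv^{T})$ with $\mu > 0$: since $-\mu vv^{T} \le 0$, monotonicity together with sublinearity gives $G(0,-\mu vv^{T}) \le G(0,0) = 0$, yet $\mathrm{tr}[(-\mu vv^{T})B] = -\mu\lambda > 0$, a contradiction. With $B \in \mathbb{S}_{+}(d)$ in hand I set $Q := (2B)^{1/2}$, so that $B = \tfrac{1}{2} QQ^{T}$.

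Finally I would obtain boundedness from the continuity of $G$ at the origin. Positive homogeneity and continuity at $(0,0)$ upgrade automatically to a linear bound $G(p,A) \le C(|p| + \|A\|)$ on all of $\mathcal{V}$ for some $C > 0$. Substituting $A = 0$, $p = q$ into $E \le G$ gives $|q|^{2} \le C|q|$, hence $|q| \le C$; substituting $p=0$, $A=B$ gives $\|B\|^{2} = \mathrm{tr}[B^{2}] \le C\|B\|$, hence $\|B\| \le C$ and $\|Q\| \le \sqrt{2C}$. Thus $\Theta := \{(q,Q) : (q,\tfrac{1}{2} QQ^{T})\in\Theta_{0},\ Q = (2B)^{1/2}\}$ is bounded in $\mathbb{R}^{d}\times\mathbb{R}^{d\times d}$ and produces the claimed representation. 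The only genuine obstacle is the positive-semi-definiteness step, which is precisely what uses monotonicity in $A$ to rule out indefinite dual matrices and produce the $QQ^{T}$ form; the Hahn--Banach piece and the continuity-to-boundedness piece are routine.
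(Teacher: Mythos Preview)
Your proposal is correct and follows precisely the route the paper indicates. The paper does not give a detailed proof of this proposition; it simply remarks that the property is classical and ``can also be checked by using Lemma~\ref{le0}.'' Your argument carries out exactly that check: the Hahn--Banach representation from Lemma~\ref{le0} on the Euclidean space $\mathbb{R}^{d}\times\mathbb{S}(d)$, followed by the use of monotonicity in $A$ to force $B\ge 0$ (allowing the $QQ^{T}$ factorisation) and of continuity at the origin plus positive homogeneity to obtain boundedness of $\Theta$. One cosmetic point: your bound $\|Q\|\le\sqrt{2C}$ is exact for the operator norm but acquires a dimension-dependent constant in the Frobenius norm; either way the boundedness conclusion stands.
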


The classical normal distribution can be characterized through the notion of
stable distributions introduced by P. L\'{e}vy \cite{Levy1} and \cite{Levy2}.
\ The distribution of a $d$-dimensional random vector $X$ in a sublinear
expectation space $(\Omega,\mathcal{H},\mathbb{\hat{E}})$ is called stable if
for each $\mathbf{a}$, $\mathbf{b}\in \mathbb{R}^{d}$, there exists
$\mathbf{c\in}\mathbb{R}^{d}$ and $d\in \mathbb{R}$ such that
\[
\left \langle \mathbf{a},X\right \rangle +\left \langle \mathbf{b},\bar
{X}\right \rangle \overset{d}{=}\left \langle \mathbf{c},X\right \rangle +d,
\]
where $\bar{X}$ is an independent copy of $X$.

The following $G$-normal distribution plays the same role as normal
distributions in the classical probability theory:

\begin{proposition}
\label{Prop-Gnorm}Let $G:\mathbb{R}^{d}\times \mathbb{S}(d)\mapsto \mathbb{R}$
be a given sublinear function monotonic in $A\in \mathbb{S}(d)$ the sense of
(\ref{mono}) and continuous in $(0,0)$. Then there exists a pair of
$d$-dimensional random vectors $(X,Y)$ in some sublinear expectation space
$(\Omega,\mathcal{H},\mathbb{\hat{E}})$ satisfying (\ref{G-func}) and the
following condition:
\begin{equation}
(aX+b\bar{X},a^{2}Y+b^{2}\bar{Y})\overset{d}{=}(\sqrt{a^{2}+b^{2}}%
X,(a^{2}+b^{2})Y\ ),\  \  \  \forall a,b\geq0,\label{G-normal}%
\end{equation}
where $(\bar{X},\bar{Y})$ is an independent copy of $(X,Y)$. The distribution
of $(X,Y)$ is uniquely determine by $G$.
\end{proposition}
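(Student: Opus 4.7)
The plan is to construct $(X,Y)$ as the canonical random vectors associated with the fully nonlinear parabolic PDE generated by $G$. For each $\varphi \in C_{l.Lip}(\mathbb{R}^d \times \mathbb{R}^d)$, I would introduce the Cauchy problem
$$\partial_t u - G(D_y u, D_{xx}^2 u) = 0, \qquad u(0, x, y) = \varphi(x, y),$$
and appeal to the viscosity-solution results summarized in the Appendix to obtain a unique solution $u^\varphi$ that inherits $C_{l.Lip}$ regularity in $(x, y)$. On the canonical space $\Omega := \mathbb{R}^d \times \mathbb{R}^d$ with $\mathcal{H} := C_{l.Lip}(\mathbb{R}^{2d})$ and coordinate projections $X, Y$, I would set $\hat{\mathbb{E}}[\varphi(X, Y)] := u^\varphi(1, 0, 0)$. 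Properties (a)--(d) of Definition \ref{Def-1} then follow, respectively, from the comparison principle, the fact that $u^c \equiv c$, and the sub-additivity and positive homogeneity of $G$ transferring (via comparison) to the nonlinear semigroup $\varphi \mapsto u^\varphi(t, \cdot, \cdot)$.

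To verify (\ref{G-func}) I would take $\varphi(x, y) = \tfrac{1}{2}\langle A x, x\rangle + \langle p, y\rangle$ and observe that the PDE has the explicit solution $u(t, x, y) = \varphi(x, y) + t\, G(p, A)$, yielding $\hat{\mathbb{E}}[\tfrac12\langle A X, X\rangle + \langle p, Y\rangle] = G(p, A)$. For (\ref{G-normal}) I would first form the product sublinear expectation space of Definition \ref{Def-2-10} in order to produce an independent copy $(\bar X, \bar Y)$ of $(X, Y)$. The crucial ingredient is the scaling identity
$$\hat{\mathbb{E}}[\varphi(\sqrt{t}\, X, t Y)] = u^\varphi(t, 0, 0),\qquad t \geq 0,$$
which one derives from the positive homogeneity of $G$ in $(p, A)$: the rescaled function $(t, x, y) \mapsto u^\varphi(\alpha t, \sqrt{\alpha}\, x, \alpha y)$ again satisfies the same PDE, so by uniqueness it equals $u^{\varphi(\sqrt\alpha\,\cdot,\,\alpha\cdot)}$. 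Independence and translation invariance reduce the left side of (\ref{G-normal}) to $\hat{\mathbb{E}}[\psi(X, Y)]$ with $\psi(x, y) = u^\varphi(b^2, a x, a^2 y)$; one then checks that $w(t, x, y) := u^\varphi(b^2 + a^2 t, a x, a^2 y)$ solves the PDE with initial datum $\psi$, so by the scaling identity $\hat{\mathbb{E}}[\psi(X, Y)] = w(1, 0, 0) = u^\varphi(a^2 + b^2, 0, 0) = \hat{\mathbb{E}}[\varphi(\sqrt{a^2 + b^2}\, X, (a^2 + b^2) Y)]$.

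For uniqueness, given any pair $(X', Y')$ on $(\Omega', \mathcal{H}', \hat{\mathbb{E}}')$ satisfying (\ref{G-func}) and (\ref{G-normal}), I would set $u'(t, x, y) := \hat{\mathbb{E}}'[\varphi(x + \sqrt{t}\, X', y + t Y')]$ and use the stability (\ref{G-normal}) together with independence from a product-space construction to derive the semigroup identity $u'(t + s, x, y) = \hat{\mathbb{E}}'[u'(s,\, x + \sqrt{t}\, X',\, y + t Y')]$. Combining this with (\ref{G-func}) applied to the second-order Taylor expansion of $u'(s, \cdot, \cdot)$ at $(x, y)$ identifies $u'$ as a viscosity solution of the same $G$-PDE with initial datum $\varphi$; uniqueness forces $u' = u^\varphi$, so the distributions of $(X', Y')$ and $(X, Y)$ coincide.

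The principal obstacle is the scaling/semigroup identity together with the uniqueness step: both require faithfully translating probabilistic operations (scaling and convolving with an independent copy) into PDE manipulations (rescaling $(t, x, y)$, composing initial data, concatenating the nonlinear semigroup) and leaning on well-posedness of the fully nonlinear degenerate PDE provided only by the viscosity-solution machinery. The algebraic feature that makes the two scalings $\sqrt{t}\, X$ and $t Y$ compatible with a single time parameter is the joint positive homogeneity of $G$ in $(p, A)$ recorded in (\ref{mono}); without it the PDE rescaling would not close up.
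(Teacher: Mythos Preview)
Your proposal is correct and follows essentially the same approach as the paper: construct $(X,Y)$ canonically via the viscosity solution $u^\varphi$ of $\partial_t u - G(D_y u, D_x^2 u)=0$, define $\hat{\mathbb{E}}[\varphi(X,Y)]:=u^\varphi(1,0,0)$, verify (a)--(d) via comparison/domination, check (\ref{G-func}) by the explicit affine-quadratic solution, and obtain (\ref{G-normal}) from the PDE scaling $u^\varphi(\lambda t,\sqrt{\lambda}\,x,\lambda y)$ together with the product-space independent copy; uniqueness is argued exactly as in the paper by showing that any such pair generates a viscosity solution of the same Cauchy problem. The only cosmetic difference is that the paper first establishes the full translated-scaled identity $\hat{\mathbb{E}}[\varphi(\bar x+\sqrt{\lambda}X,\bar y+\lambda Y)]=u^\varphi(\lambda,\bar x,\bar y)$ and then concatenates the semigroup, whereas you fold these into the single auxiliary function $w(t,x,y)=u^\varphi(b^2+a^2 t,ax,a^2 y)$.
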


\begin{example}
\label{Prop-G1}For the sublinear function $\bar{G}:\mathbb{R}^{d}%
\mapsto \mathbb{R}$ defined by $\bar{G}(p):=G(p,0)$, $p\in \mathbb{R}^{d}$, we
can concretely construct a $d$-dimensional random vector $Y$ in some sublinear
expectation space $(\Omega,\mathcal{H},\mathbb{\hat{E}})$ satisfying
\begin{equation}
\bar{G}(p):=\mathbb{\hat{E}}[\left \langle p,Y\right \rangle ],\  \  \ p\in
\mathbb{R}^{d}\label{plinear}%
\end{equation}
and the following condition:
\begin{equation}
a^{2}Y+b^{2}\bar{Y}\overset{d}{=}(a^{2}+b^{2})Y,\  \  \  \forall a,b\in
\mathbb{R},\  \label{a2b2}%
\end{equation}
where $Y$ is an independent copy of $Y$. In fact we can take $\Omega
=\mathbb{R}^{d}$, $\mathcal{H}=C_{l.Lip}(\mathbb{R}^{d})$ and $Y(\omega
)=\omega$. To define the corresponding sublinear expectation $\mathbb{\hat{E}%
}$, we apply Proposition \ref{Sublinear} to find a subset $\bar{\Theta}%
\in \mathbb{R}^{d}$ such that
\begin{equation}
\bar{G}(p)=\sup_{q\in \bar{\Theta}}\left \langle p,q\right \rangle ,\  \  \  \ p\in
\mathbb{R}^{d}.\label{thetabar}%
\end{equation}
Then for each $\xi \in \mathcal{H}$ of the form $\xi(\omega)=\varphi(\omega)$,
$\varphi \in C_{l.Lip}(\mathbb{R}^{d})$. $\omega \in \mathbb{R}^{d}$ we set
\begin{equation}
\mathbb{\hat{E}}[\xi]=\sup_{\omega \in \bar{\Theta}}\varphi(\omega
).\label{supOmega}%
\end{equation}
It is easy to check that the distribution of $Y$ satisfies (\ref{plinear}) and
(\ref{a2b2}). It is the so-called worst case distribution with respect to the
subset of mean uncertainty $\bar{\Theta}$. We denote this distribution by
$\mathcal{U}(\bar{\Theta})$.
\end{example}

\begin{example}
\label{Prop-G0}For the sublinear and monotone function $\hat{G}:\mathbb{S}%
(d)\mapsto \mathbb{R}$ defined by $\hat{G}(A):=G(0,A)$, $A\in \mathbb{S}(d)$ the
$d$-dimensional random vector $X$ in Proposition \ref{Prop-Gnorm} satisfies
\begin{equation}
\hat{G}(A):=\frac{1}{2}\mathbb{\hat{E}}[\left \langle AX,X\right \rangle
],\  \  \ p\in \mathbb{R}^{d}\label{Qlinear}%
\end{equation}
and the following condition:
\begin{equation}
aX+b\bar{X}\overset{d}{=}\sqrt{a^{2}+b^{2}}X,\  \  \  \forall a,b\in
\mathbb{R},\  \label{srt-a2b2}%
\end{equation}
where $\bar{X}$ is an independent copy of $X$. In particular, for each
components $X_{i}$ of $X$ and $\bar{X}_{i}$ of $\bar{X}$, we have $\sqrt
{2}\mathbb{\hat{E}}[X_{i}]=\mathbb{\hat{E}}[X_{i}+\bar{X}_{i}]=2\mathbb{\hat
{E}}[X_{i}]$ and $\sqrt{2}\mathbb{\hat{E}}[-X_{i}]=\mathbb{\hat{E}}%
[-X_{i}-\bar{X}_{i}]=2\mathbb{\hat{E}}[-X_{i}]$ it follows that $X$ has no
mean uncertainty:
\[
\mathbb{\hat{E}}[X_{i}]=\mathbb{\hat{E}}[-X_{i}]=0,\  \ i=1,\cdots,d.
\]
On the other hand, by Proposition \ref{Sublinear} we can find a bounded subset
$\hat{\Theta}\in \mathbb{S}_{+}(d)$ such that
\begin{equation}
\frac{1}{2}\mathbb{\hat{E}}[\left \langle AX,X\right \rangle ]=\hat{G}%
(A)=\frac{1}{2}\sup_{Q\in \bar{\Theta}}tr[AQ],\  \ A\in \mathbb{S}%
(d).\label{Theta}%
\end{equation}
If $\hat{\Theta}$ is a singleton $\hat{\Theta}=\{Q\}$, then $X$ is a classical
zero-mean normal distributed with covariance $Q$. In general $\hat{\Theta}$
characterizes the covariance uncertainty of $X$.
\end{example}

\begin{definition}
(\textbf{$G$-distribution}) \label{Def-Gnormal} The pair of $d$-dimensional
random vectors $(X,Y)$ in the above proposition is called $G$-distributed. $X$
is said to be $\hat{G}$-normal distributed. We denote the distribution of $X$
by $X\overset{d}{=}\mathcal{N}(0,\hat{\Theta})$.
\end{definition}

Proposition \ref{gG-P1} and Corollary \ref{gG-P1coro} show that a
$G$-distribution is a uniquely defined sublinear distribution on
$(\mathbb{R}^{2d},C_{l.Lip}(\mathbb{R}^{2d}))$. We will show that a pair of
$G$-distributed random vectors{ is characterized, or generated, by the
following parabolic PDE }defined on $[0,\infty)\times \mathbb{R}^{d}%
\times \mathbb{R}^{d}$:
\begin{equation}
\partial_{t}u-G(D_{y}u,D_{x}^{2}u)=0, \label{eq-G-heat}%
\end{equation}
with Cauchy condition$\  \ u|_{t=0}=\varphi$, where $D_{y}=(\partial_{y_{i}%
})_{i=1}^{d}$, $D_{x}^{2}=(\partial_{x_{i},x_{j}}^{2})_{i,j=1}^{d}$.
(\ref{eq-G-heat}) is called the $G$-heat equation.

\begin{remark}
We will use the notion of viscosity solutions to the generating heat equation
(\ref{eq-G-heat}). This notion was introduced by Crandall and Lions. For the
existence and uniqueness of solutions and related very rich references we
refer to Crandall, Ishii and Lions \cite{CIL} (see Appendix for the
uniqueness). We note that, in the situation where $\underline{\sigma}^{2}>0$,
the viscosity solution (\ref{eq-G-heat}) becomes a classical $C^{1+\frac
{\alpha}{2},2+\alpha}$-solution (see \cite{Krylov} and the recent works of
\cite{Caff1997} and \cite{WangL}). Readers can understand (\ref{eq-G-heat}) in
the classical meaning.
\end{remark}

\begin{definition}
\text{ A} real-valued continuous function $u\in C([0,T]\times \mathbb{R}^{d})$
is called a viscosity subsolution (respectively, supersolution) of
(\ref{eq-G-heat}) if, for each function $\psi \in C_{b}^{3}((0,\infty
)\times \mathbb{R}^{d}\times \mathbb{R}^{d})$ and for each minimum
(respectively, maximum) point $(t,x,y)\in(0,\infty)\times \mathbb{R}^{d}%
\times \mathbb{R}^{d}$ of $\psi-u$, we have
\[
\partial_{t}\psi-G(D_{y}\psi,D_{x}^{2}\psi)\leq0\  \ (\text{respectively, }%
\geq0).
\]
$u$ is called a viscosity solution of (\ref{eq-G-heat}) if it is both super
and subsolution.
\end{definition}

\begin{proposition}
\label{gG-P1}Let $(X,Y)$ be $G$-distributed. For each $\varphi \in
C_{l.Lip}(\mathbb{R}^{d}\times \mathbb{R}^{d})$ we define a function%
\[
u(t,x,y):=\mathbb{\hat{E}}[\varphi((x+\sqrt{t}X,y+tY)],\ (t,x)\in
\lbrack0,\infty)\times \mathbb{R}.\
\]
Then we have%
\begin{equation}
u(t+s,x,y)=\mathbb{\hat{E}}[u(t,x+\sqrt{s}X,y+sY)],\  \ s\geq0. \label{DPP}%
\end{equation}
We also have the estimates: For each $T>0$ there exist constants $C,k>0$ such
that, for all $t,s\in \lbrack0,T]$ and $x,y\in \mathbb{R}$,
\begin{equation}
|u(t,x,y)-u(t,\bar{x},\bar{y})|\leq C(1+|x|^{k}+|y|^{k}+|\bar{x}|^{k}+|\bar
{y}|^{k})|x-y| \label{|x-y|}%
\end{equation}
and%
\begin{equation}
|u(t,x,y)-u(t+s,x+y)|\leq C(1+|x|^{k}+|y|^{k})(s+|s|^{1/2}). \label{|s|}%
\end{equation}
Moreover, $u$ is the unique viscosity solution, continuous in the sense of
(\ref{|x-y|}) and (\ref{|s|}), of the generating PDE (\ref{eq-G-heat}).
\end{proposition}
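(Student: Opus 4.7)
The strategy has three phases: derive the dynamic programming identity (\ref{DPP}) directly from the defining scaling identity of a $G$-distribution, deduce the continuity estimates from sub-additivity together with finite moments of $(X,Y)$, and finally Taylor-expand a smooth test function inside the DPP to read off the viscosity inequalities.

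For the DPP, take an independent copy $(\bar X, \bar Y)$ of $(X,Y)$ in the product space provided by Proposition \ref{Prop12}. The scaling identity (\ref{G-normal}) with $a=\sqrt{s}$, $b=\sqrt{t}$ gives $(\sqrt{s}X+\sqrt{t}\bar X,\ sY+t\bar Y)\overset{d}{=}(\sqrt{s+t}X,\ (s+t)Y)$, and the definition of independence of $(\bar X,\bar Y)$ from $(X,Y)$ yields
\begin{align*}
u(t+s,x,y) &= \hat{\mathbb{E}}[\varphi(x+\sqrt{s}X+\sqrt{t}\bar X,\ y+sY+t\bar Y)] \\
&= \hat{\mathbb{E}}\bigl[\hat{\mathbb{E}}[\varphi(x'+\sqrt{t}\bar X,\ y'+t\bar Y)]_{x'=x+\sqrt{s}X,\ y'=y+sY}\bigr] \\
&= \hat{\mathbb{E}}[u(t,\ x+\sqrt{s}X,\ y+sY)].
\end{align*}
The spatial bound (\ref{|x-y|}) follows by sub-additivity from the local-Lipschitz estimate on $\varphi$, once one notes that $\hat{\mathbb{E}}[|X|^k]$ and $\hat{\mathbb{E}}[|Y|^k]$ are finite for every $k$ (from the bounded set $\Theta$ in Proposition \ref{Sublinear}). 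The temporal bound (\ref{|s|}) then follows by combining the DPP with (\ref{|x-y|}) and these moment bounds.

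The heart of the argument is the PDE verification. Let $\psi\in C_b^3$ and let $(t,x,y)$ with $t>0$ be a minimum point of $\psi-u$; after subtracting a constant we may assume $\psi(t,x,y)=u(t,x,y)$. The DPP then yields, for small $\delta>0$,
\[
0 \;\leq\; \hat{\mathbb{E}}[\psi(t-\delta,\ x+\sqrt{\delta}X,\ y+\delta Y)-\psi(t,x,y)].
\]
A Taylor expansion rewrites the integrand as
\[
-\delta\,\partial_t\psi + \sqrt{\delta}\,\langle D_x\psi, X\rangle + \delta\,\langle D_y\psi, Y\rangle + \tfrac{\delta}{2}\langle D_x^2\psi\,X, X\rangle + R_\delta,
\]
with $\hat{\mathbb{E}}[|R_\delta|]=o(\delta)$. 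By Example \ref{Prop-G0}, each component $X_i$ satisfies $\hat{\mathbb{E}}[X_i]=\hat{\mathbb{E}}[-X_i]=0$, so the $\sqrt{\delta}$-order term has no mean uncertainty and Proposition \ref{Prop-X+Y} lets me strip it off without changing $\hat{\mathbb{E}}$. The definition (\ref{G-func}) of $G$ then collapses the remaining random term to $\delta\,G(D_y\psi, D_x^2\psi)$; dividing by $\delta$ and letting $\delta\downarrow 0$ gives $\partial_t\psi-G(D_y\psi, D_x^2\psi)\leq 0$. The supersolution inequality is identical with the inequalities reversed, and uniqueness of $u$ in the class of continuous functions satisfying (\ref{|x-y|})--(\ref{|s|}) follows from the standard comparison principle for this class of degenerate parabolic equations recorded in the Appendix.

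The main obstacle is a uniform control of the Taylor remainder: I need $\hat{\mathbb{E}}[|R_\delta|]=o(\delta)$ using only moment bounds on $X,Y$ together with a Cauchy--Schwarz-type estimate of the form $\hat{\mathbb{E}}[|\xi\eta|]\leq \hat{\mathbb{E}}[|\xi|^2]^{1/2}\hat{\mathbb{E}}[|\eta|^2]^{1/2}$ in the sublinear setting, and I must be careful that the zero-mean $\sqrt{\delta}$-order term truly contributes nothing after passing $\hat{\mathbb{E}}$ through the sum, rather than leaking an $O(\sqrt{\delta})$ contribution that would destroy the limit.
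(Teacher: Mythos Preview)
Your proposal is correct and follows essentially the same route as the paper: the DPP from the scaling identity plus independence, the spatial estimate from the local-Lipschitz bound on $\varphi$ and sub-additivity, the time estimate by feeding the spatial estimate back through the DPP, and the viscosity verification by Taylor-expanding a $C_b^3$ test function inside the DPP. Your two worries at the end are exactly the two points the paper handles explicitly: since $\psi\in C_b^{3}$, the Taylor remainder is controlled by $C(\delta^{3/2}+\delta^{2})(1+|X|^{3}+|Y|^{3})$ whose $\hat{\mathbb{E}}$-expectation is $O(\delta^{3/2})$, and the $\sqrt{\delta}\,\langle D_x\psi,X\rangle$ term drops out \emph{exactly} (not merely to order $\sqrt{\delta}$) by Proposition~\ref{Prop-X+Y}, because $\langle p,X\rangle$ has no mean uncertainty for every fixed $p$.
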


\begin{proof}
Since%
\begin{align*}
u(t,x,y)-u(t,\bar{x},\bar{y}) &  =\mathbb{\hat{E}}[\varphi(x+\sqrt
{t}X,y+tY)]-\mathbb{\hat{E}}[\varphi(\bar{x}+\sqrt{t}X,\bar{y}+tY)]\\
&  \leq \mathbb{\hat{E}}[\varphi(x+\sqrt{t}X,y+tY)-\varphi(\bar{x}+\sqrt
{t}X,\bar{y}+tY)]\\
&  \leq \mathbb{\hat{E}}[C_{1}(1+|X|^{k}+|Y|^{k}+|x|^{k}+|y|^{k}+|\bar{x}%
|^{k}+|\bar{y}|^{k})]\\
&  \  \  \  \  \times(|x-\bar{x}|+|y-\bar{y}|)\\
&  \leq C(1+|x|^{k}+|y|^{k}+|\bar{x}|^{k}+|\bar{y}|^{k})(|x-\bar{x}%
|+|y-\bar{y}|).
\end{align*}
We then have (\ref{|x-y|}). Let $(\bar{X},\bar{Y})$ be an independent copy of
$(X,Y)$. Since $(X,Y)$ is $G${-distributed, then}%
\begin{align*}
u(t+s,x,y) &  =\mathbb{\hat{E}}[\varphi(x+\sqrt{t+s}X,y+(t+s)Y)]\\
&  =\mathbb{\hat{E}}[\varphi(x+\sqrt{s}X+\sqrt{t}\bar{X},y+sY+t\bar{Y})]\\
&  =\mathbb{\hat{E}}[\mathbb{\hat{E}}[\varphi(x+\sqrt{s}\widetilde{x}+\sqrt
{t}\bar{X},y+s\widetilde{y}+t\bar{Y})]_{(\widetilde{x},\widetilde{y}%
)=(X,Y)}]\\
&  =\mathbb{\hat{E}}[u(t,x+\sqrt{s}X,y+sY)].
\end{align*}
We thus obtain (\ref{DPP}). From this and (\ref{|x-y|}) it follows that
\begin{align*}
&  u(t+s,x,y)-u(t,x,y)=\mathbb{\hat{E}}[u(t,x+\sqrt{s}X,y+sY)-u(t,x)]\\
&  \leq \mathbb{\hat{E}}[C_{1}(1+|x|^{k}+|y|^{k}+|X|^{k}+|Y|^{k})(\sqrt
{s}|X|+s|Y|)].
\end{align*}
Thus we obtain (\ref{|s|}). Now, for a fixed $(t,x,y)\in(0,\infty
)\times \mathbb{R}^{d}\times \mathbb{R}^{d}$, let $\psi \in C_{b}^{1,3}%
([0,\infty)\times \mathbb{R}^{d}\times \mathbb{R}^{d})$ be such that $\psi \geq
u$ and $\psi(t,x,y)=u(t,x,y)$. By (\ref{DPP}) and Taylor's expansion it
follows that, for $\delta \in(0,t)$,
\begin{align*}
0 &  \leq \mathbb{\hat{E}}[\psi(t-\delta,x+\sqrt{\delta}X,y+\delta
Y)-\psi(t,x,y)]\\
&  \leq \bar{C}(\delta^{3/2}+\delta^{2})-\partial_{t}\psi(t,x,y)\delta \\
&  +\mathbb{\hat{E}}[\left \langle D_{x}\psi(t,x,y),X\right \rangle \sqrt
{\delta}+\left \langle D_{y}\psi(t,x,y),Y\right \rangle \delta+\frac{1}%
{2}\left \langle D_{x}^{2}\psi(t,x,y)X,X\right \rangle \delta]\\
&  =-\partial_{t}\psi(t,x,y)\delta+\mathbb{\hat{E}}[\left \langle D_{y}%
\psi(t,x,y),Y\right \rangle +\frac{1}{2}\left \langle D_{x}^{2}\psi
(t,x,y)X,X\right \rangle ]\delta+\bar{C}(\delta^{3/2}+\delta^{2})\\
&  =-\partial_{t}\psi(t,x,y)\delta+\delta G(D_{y}\psi,D_{x}^{2}\psi
)(t,x,y)+\bar{C}(\delta^{3/2}+\delta^{2}).
\end{align*}
From which it is easy to check that
\[
\lbrack \partial_{t}\psi-G(D_{y}\psi,D_{x}^{2}\psi)](t,x,y)\leq0.
\]
Thus $u$ is a viscosity supersolution of (\ref{eq-G-heat}). Similarly we can
prove that $u$ is a viscosity subsolution of (\ref{eq-G-heat}).
\end{proof}

\begin{corollary}
\label{gG-P1coro} If both $(X,Y)$ and $(\bar{X},\bar{Y})$ are $G$-{distributed
with the same }$G${, i.e.,}%
\[
G(p,A):=\mathbb{\hat{E}}[\frac{1}{2}\left \langle AX,X\right \rangle
+\left \langle p,Y\right \rangle ]=\mathbb{\hat{E}}[\frac{1}{2}\left \langle
A\bar{X},\bar{X}\right \rangle +\left \langle p,\bar{Y}\right \rangle
],\  \  \forall(p,A)\in \mathbb{S}(d)\times \mathbb{R}^{d}.
\]
{ then }$(X,Y)\overset{d}{=}(\bar{X},\bar{Y})$. In particular, $X\overset
{d}{=}-X$.
\end{corollary}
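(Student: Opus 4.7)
The whole argument is an application of the uniqueness of viscosity solutions of the $G$-heat equation (\ref{eq-G-heat}), which was just packaged in Proposition~\ref{gG-P1}.

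For the main assertion, fix an arbitrary $\varphi \in C_{l.Lip}(\mathbb{R}^d \times \mathbb{R}^d)$ and define
$$u(t,x,y) := \mathbb{\hat{E}}[\varphi(x + \sqrt{t}\,X,\, y + tY)], \qquad \bar u(t,x,y) := \mathbb{\hat{E}}[\varphi(x + \sqrt{t}\,\bar X,\, y + t\bar Y)].$$
Applying Proposition~\ref{gG-P1} to each pair, both $u$ and $\bar u$ are viscosity solutions of $\partial_t v - G(D_y v, D_x^2 v) = 0$ with Cauchy data $v|_{t=0} = \varphi$, and both satisfy the growth and continuity estimates (\ref{|x-y|})--(\ref{|s|}); crucially, the nonlinearity $G$ is, by hypothesis, the same for both pairs. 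The uniqueness clause of Proposition~\ref{gG-P1} then forces $u \equiv \bar u$. Evaluating at $(t,x,y) = (1, 0, 0)$ gives $\mathbb{\hat{E}}[\varphi(X,Y)] = \mathbb{\hat{E}}[\varphi(\bar X, \bar Y)]$, and since $\varphi$ is arbitrary this says exactly $(X,Y) \stackrel{d}{=} (\bar X, \bar Y)$.

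For the ``in particular'' assertion, I would apply what has just been proved to the two candidate pairs $(X, 0)$ and $(-X, 0)$, each of which is $G$-distributed with the same sublinear function $G(p, A) = \hat G(A) := \frac{1}{2}\mathbb{\hat{E}}[\langle AX, X\rangle]$. The identity $\mathbb{\hat{E}}[\langle A(-X), -X\rangle] = \mathbb{\hat{E}}[\langle AX, X\rangle]$ is immediate, and the self-similarity (\ref{G-normal}) transfers from $X$ to $-X$ because $-\bar X$ is an independent copy of $-X$ whenever $\bar X$ is an independent copy of $X$, and distributional equality is preserved under the post-composition $z \mapsto -z$. The first assertion then yields $(X, 0) \stackrel{d}{=} (-X, 0)$, hence $X \stackrel{d}{=} -X$.

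The only genuinely non-trivial ingredient hidden in this short argument is the comparison/uniqueness principle for viscosity solutions of the fully-nonlinear, possibly degenerate parabolic equation (\ref{eq-G-heat}) within the class of continuous functions obeying the polynomial-growth estimates (\ref{|x-y|}) and (\ref{|s|}). That is the main obstacle, and is precisely the reason Proposition~\ref{gG-P1} was stated with those estimates built in; the supporting comparison result is deferred to the Appendix in the style of Crandall--Ishii--Lions. Once one has it, everything here is bookkeeping.
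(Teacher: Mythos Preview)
Your proof of the main assertion is exactly the paper's: define $u$ and $\bar u$ via Proposition~\ref{gG-P1}, invoke uniqueness of viscosity solutions of (\ref{eq-G-heat}) with common Cauchy data $\varphi$, and evaluate at $(1,0,0)$. The paper does not spell out the ``in particular'' clause; your argument via the pairs $(X,0)$ and $(-X,0)$ is correct, though a marginally more direct variant is to apply the main assertion to $(X,Y)$ and $(-X,Y)$, which avoids having to extract the marginal self-similarity of $X$ from the joint condition (\ref{G-normal}).
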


\begin{proof}
For each $\varphi \in C_{l.Lip}(\mathbb{R}^{d}\times \mathbb{R}^{d})$ we set
\begin{align*}
u(t,x,y) &  :=\mathbb{\hat{E}}[\varphi(x+\sqrt{t}X,y+tY)],\  \\
\bar{u}(t,x,y) &  :=\mathbb{\hat{E}}[\varphi(x+\sqrt{t}\bar{X},y+t\bar
{Y})],\ (t,x)\in \lbrack0,\infty)\times \mathbb{R}.
\end{align*}
By the above Proposition, both $u$ and $\bar{u}$ are viscosity solutions of
the $G$-heat equation (\ref{eq-G-heat}) with Cauchy condition $u|_{t=0}%
=\bar{u}|_{t=0}=\varphi$. It follows from the uniqueness of the viscosity
solution that $u\equiv \bar{u}$. In particular%
\[
\mathbb{\hat{E}}[\varphi(X,Y)]=\mathbb{\hat{E}}[\varphi(\bar{X},\bar
{Y})]\text{.}%
\]
Thus $(X,Y)\overset{d}{=}(\bar{X},\bar{Y})$.
\end{proof}

\begin{corollary}
Let $(X,Y)$ be $G$-distributed. For each $\psi \in C_{l.Lip}(\mathbb{R}^{d})$
we define a function%
\[
v(t,x):=\mathbb{\hat{E}}[\psi((x+\sqrt{t}X+tY)],\ (t,x)\in \lbrack
0,\infty)\times \mathbb{R}^{d}.\
\]
Then $v$ is the unique viscosity solution of the following parabolic PDE%
\begin{equation}
\partial_{t}v-G(D_{x}v,D_{x}^{2}v)=0,\  \  \  \ v|_{t=0}=\psi. \label{gG-heat}%
\end{equation}
Moreover we have $v(t,x+y)\equiv u(t,x,y)$, where $u$ is the solution of the
PDE (\ref{eq-G-heat}) with initial condition $u(t,x,y)|_{t=0}=\psi(x+y)$.
\end{corollary}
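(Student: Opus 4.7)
The plan is to reduce the claim to Proposition \ref{gG-P1} by considering the auxiliary test function $\varphi(x,y) := \psi(x+y)$, which clearly lies in $C_{l.Lip}(\mathbb{R}^{d}\times\mathbb{R}^{d})$. Applying Proposition \ref{gG-P1} with this $\varphi$, the function
\[
u(t,x,y) = \mathbb{\hat{E}}[\psi(x+y+\sqrt{t}X + tY)]
\]
is the unique viscosity solution of $\partial_t u - G(D_y u, D_x^2 u) = 0$ with Cauchy datum $u|_{t=0}=\psi(x+y)$, satisfying the regularity estimates (\ref{|x-y|}) and (\ref{|s|}). Directly from the definitions one has the identity $u(t,x,y) = v(t, x+y)$, which proves the second assertion of the corollary and, simultaneously, supplies the required continuity estimates for $v$.

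Next, I would transfer the viscosity-solution property from $u$ to $v$ by a chain-rule argument. Let $\phi \in C_b^3((0,\infty)\times\mathbb{R}^{d})$ and suppose $\phi - v$ attains a minimum at some $(t_0,z_0)\in(0,\infty)\times\mathbb{R}^{d}$. Lift $\phi$ to $\Phi(t,x,y):=\phi(t,x+y)$, which still lies in $C_b^3$. Since $(\Phi - u)(t,x,y) = (\phi - v)(t,x+y)$, the function $\Phi - u$ attains a minimum at every $(t_0,x_0,y_0)$ with $x_0+y_0=z_0$. Applying the subsolution property of $u$ at any such point and using $\partial_t \Phi = \partial_t \phi$, $D_y \Phi = D_x\phi$, $D_x^2 \Phi = D_x^2 \phi$ (all evaluated at $(t_0,z_0)$ after the chain rule), one obtains
\[
\partial_t\phi - G(D_x\phi, D_x^2\phi) \leq 0 \quad \text{at }(t_0,z_0),
\]
so $v$ is a viscosity subsolution of (\ref{gG-heat}). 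The supersolution half is entirely symmetric.

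For uniqueness, I would invoke the comparison principle for second-order parabolic equations associated with the sublinear, monotone $G$, as discussed in the appendix; this principle is applicable since the continuity moduli inherited via $v(t,x+y)=u(t,x,y)$ from (\ref{|x-y|}) and (\ref{|s|}) place $v$ in the class where comparison holds. The only step that really requires care is this last one: one must make sure that the polynomial-growth class in which (\ref{gG-heat}) is well posed is indeed the class quoted from the appendix for the equation (\ref{eq-G-heat}), and that the lift $v\mapsto \Phi$ does not enlarge the relevant function class. This is essentially a bookkeeping check rather than a genuine obstacle, because the passage $(t,x,y)\mapsto(t,x+y)$ preserves the polynomial weights used in (\ref{|x-y|})--(\ref{|s|}).
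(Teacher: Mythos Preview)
The paper states this corollary without proof, presumably because it is regarded as an immediate consequence of Proposition \ref{gG-P1}. Your proposal supplies exactly the natural argument the paper omits: take $\varphi(x,y)=\psi(x+y)$, read off the identity $u(t,x,y)=v(t,x+y)$, and transfer the viscosity property by lifting test functions via $\Phi(t,x,y)=\phi(t,x+y)$; uniqueness then comes directly from the comparison results in the appendix applied to the one-variable equation (\ref{gG-heat}), since $G$ satisfies the hypotheses of Theorem \ref{Thm-dom} and Corollary \ref{Comparison}. The argument is correct as written; the only cosmetic point is that the minima of $\Phi-u$ are attained on an entire affine subspace $\{x+y=z_0\}$, but the paper's definition of viscosity subsolution requires the inequality merely at \emph{a} minimum point, so this causes no difficulty.
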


\subsection{Proof of Proposition \ref{Prop-Gnorm}}

We now proceed to prove Proposition \ref{Prop-Gnorm}. Let $u=u^{\varphi}$ be
the unique viscosity solution of the $G$-heat equation (\ref{eq-G-heat}) with
$u^{\varphi}|_{t=0}=\varphi$, Then we take $\widetilde{\Omega}=\mathbb{R}%
^{2d}$, $\mathcal{\widetilde{H}}=C_{l.Lip}(\mathbb{R}^{2d})$, $\widetilde
{\omega}=(x,y)\in \mathbb{R}^{2d}$. The corresponding sublinear expectation
$\mathbb{\widetilde{E}}[\cdot]$ is defined by, for each $\xi \in \mathcal{H}$ of
the form $\xi(\omega)=(\varphi(x,y))_{(x,y)\in \mathbb{R}^{2d}}\in
C_{l.Lip}(\mathbb{R}^{2d})$, $\mathbb{\widetilde{E}}[\xi]=u^{\varphi}(1,0)$.
The monotonicity and sub-linearity of $u^{\varphi}$ with respect to $\varphi$
are known in the theory of viscosity solution. For reader's convenience we
provide a new and simple proof in the Appendix (see Corollary \ref{Comparison}
and Corollary \ref{Domination}). The positive homogeneity of
$\mathbb{\widetilde{E}}[\cdot]$ is easy to be checked.

We now consider a pairs of $d$-dimensional random vectors $(\widetilde
{X},\widetilde{Y})(\omega)=(x,y)$. We have
\[
\mathbb{\hat{E}}[\varphi(\widetilde{X},\widetilde{Y})]=u^{\varphi
}(1,0),\  \  \  \  \forall \varphi \in C_{l.Lip}(\mathbb{R}^{2d}).
\]
In particular, just set $\varphi_{0}(x,y)=\frac{1}{2}\left \langle
Ax,x\right \rangle +\left \langle p,y\right \rangle $, we can check that
\[
u^{\varphi_{0}}(t,x,y):=G(p,A)t+\frac{1}{2}\left \langle Ax,x\right \rangle
+\left \langle p,y\right \rangle .
\]
We thus have
\[
\mathbb{\widetilde{E}}[\frac{1}{2}\left \langle A\widetilde{X},\widetilde
{X}\right \rangle +\left \langle p,\widetilde{Y}\right \rangle ]=u^{\varphi_{0}%
}(t,0)|_{t=1}=G(p,A),\  \ (p,A)\in \mathbb{R}^{d}\times \mathbb{S}(n).
\]

To prove that the distribution of $(\widetilde{X},\widetilde{Y})$ satisfies
condition (\ref{G-normal}), we follow Definition \ref{Def-2-10} to construct a
product space of sublinear expectation
\[
(\Omega,\mathcal{H},\mathbb{\hat{E}})=(\widetilde{\Omega}\times \widetilde
{\Omega},\mathcal{\widetilde{H}\times \widetilde{H}},\mathbb{\widetilde
{E}\times \widetilde{E}})
\]
and introduce two pair of random vectors%
\[
(X,Y)(\omega_{1},\omega_{2})=\omega_{1},\ (\bar{X},\bar{Y})(\omega_{1}%
,\omega_{2})=\omega_{2},\  \  \ (\omega_{1},\omega_{2})\in \widetilde{\Omega
}\times \widetilde{\Omega}.\
\]
By Proposition \ref{Prop12} both $(X,Y)\overset{d}{=}(\widetilde{X}%
,\widetilde{Y})$ and $(\bar{X},\bar{Y})$ is an independent copy of $(X,Y)$.
For each $\varphi \in C_{l.Lip}(\mathbb{R}^{2d})$ and for each fixed
$\lambda>0$, $(\bar{x},\bar{y})\in \mathbb{R}^{2d}$, since the function $v$
defined by $v(t,x,y):=u^{\varphi}(\lambda t,\bar{x}+\sqrt{\lambda}x,\bar
{y}+\lambda y)$ solves exactly the same equation (\ref{eq-G-heat}) but with
Cauchy condition
\[
v|_{t=0}=\varphi(\bar{x}+\sqrt{\lambda}\times \cdot,\bar{y}+\lambda \times
\cdot).
\]
Thus
\begin{align*}
\mathbb{\hat{E}}[\varphi(\bar{x}+\sqrt{\lambda}X,\bar{y}+\lambda Y)] &
=v(t,\bar{x},\bar{y})|_{t=1}\\
&  =u^{\varphi(\sqrt{\lambda}\times \cdot,\lambda \times \cdot)}(t,\bar{x}%
,\bar{y})|_{t=1}=u^{\varphi}(\lambda,\bar{x},\bar{y}).\
\end{align*}
By the definition of $\mathbb{\hat{E}}$, for each $t>0$ and $s>0$,
\begin{align*}
\mathbb{\hat{E}}[\varphi(\sqrt{t}X+\sqrt{s}\bar{X},tY+s\bar{Y})] &
=\mathbb{\widetilde{E}}[\mathbb{\widetilde{E}}[\varphi(\sqrt{t}x+\sqrt{s}%
\bar{X},ty+s\bar{Y})]_{(x,y)=(X,Y)}]\\
&  =u^{u^{\varphi}(s,\cdot,\cdot)}(t,0,0)=u^{\varphi}(t+s,0,0)\\
&  =\mathbb{\hat{E}}[\varphi(\sqrt{t+s}X,(t+s)Y)].
\end{align*}
Namely $(\sqrt{t}X+\sqrt{s}\bar{X},tY+s\bar{Y})\overset{d}{=}(\sqrt
{t+s}X,(t+s)Y)$. Thus the distribution of $(X,Y)$ satisfies condition
(\ref{G-normal}){.}

It remains to check that the functional $\mathbb{\widetilde{E}}[\cdot
]:C_{l.Lip}(\mathbb{R}^{2d})\mapsto \mathbb{R}$ forms a sublinear expectation,
i.e., (a)-(d) of Definition \ref{Def-1} are satisfied. Indeed, (a) is simply
the consequence of comparison theorem, or the maximum principle of viscosity
solution (see [CIL], the prove of this comparison theorem as well as the
sub-additivity (c) are given in the Appendix of [P6]). It is also easy to
check that, when $\varphi \equiv c$, the unique solution of (\ref{eq-G-heat})
is also $u\equiv c$; hence (b) holds true. (d) also holds since $u^{\lambda
\varphi}=\lambda u^{\varphi}$, $\lambda \geq0$. \ The proof is complete.

\section{Central Limit Theorem{ }}

\begin{theorem}
\label{CLT}{(Central Limit Theorem) Let a sequence $\left \{  (X_{i}%
,Y_{i})\right \}  _{i=1}^{\infty}$ of }$\mathbb{R}^{d}\times \mathbb{R}^{d}%
$-valued random variables in{ $(\mathcal{H},\mathbb{\hat{E}})$}. We assumed
that $(X_{i+1},Y_{i+1})\overset{d}{=}(X_{i},Y_{i})${ and $(X_{i+1},Y_{i+1})$
is independent to $\{(X_{1},Y_{1}),\cdots,(X_{i},Y_{i})\}$ for each
$i=1,2,\cdots$. We assume furthermore that,
\[
\mathbb{\hat{E}}[X_{1}]=\mathbb{\hat{E}}[-X_{1}]=0,\  \
\]
Then the sequence }$\{ \bar{S}_{n}\}_{n=1}^{\infty}$ defined by{ }%
\[
\bar{S}_{n}:=\sum_{i=1}^{n}(\frac{X_{i}}{\sqrt{n}}+\frac{Y_{i}}{n})
\]
{converges in law to }$\xi+\zeta${:
\begin{equation}
\lim_{n\rightarrow \infty}\mathbb{\hat{E}}[\varphi(\bar{S}_{n}%
)]=\mathbb{\widetilde{E}}[\varphi(\xi+\zeta)],\  \  \  \  \label{e12}%
\end{equation}
}for all functions $\varphi \in C(\mathbb{R}^{d})$ satisfying a polynomial
growth condition, {where }$(\xi,\zeta)$ is {a pair of }$G$-distributed random
vectors and {where the sublinear function }$G:\mathbb{S}(d)\times
\mathbb{R}^{d}\mapsto \mathbb{R}$ is defined by{ }%
\[
G(p,A):=\mathbb{\hat{E}}[\left \langle p,Y_{1}\right \rangle +\frac{1}%
{2}\left \langle AX_{1},X_{1}\right \rangle ]{,\  \ }A\in \mathbb{S}%
(d),\  \ p\in \mathbb{R}^{d}.
\]

\end{theorem}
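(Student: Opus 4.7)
The plan is to interpolate between $V(1,0)$ and $\mathbb{\hat{E}}[\varphi(\bar S_n)]$ via the viscosity solution of the $G$-heat equation, and to annihilate the leading terms of a Taylor expansion using the PDE together with the zero-mean hypothesis on $X_1$. Concretely, I would take $V(t,x):=\mathbb{\widetilde{E}}[\varphi(x+\sqrt{t}\,\xi+t\zeta)]$, which by the corollary following Proposition \ref{gG-P1} is the unique viscosity solution of $\partial_t V-G(D_xV,D_x^2V)=0$ with $V|_{t=0}=\varphi$; in particular $V(1,0)=\mathbb{\widetilde{E}}[\varphi(\xi+\zeta)]$, so the theorem reduces to showing $\mathbb{\hat{E}}[\varphi(\bar S_n)]\to V(1,0)$.

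Introduce the discrete interpolation $J_k:=V\bigl(1-\tfrac{k}{n},\bar S_k\bigr)$, with $J_0=V(1,0)$ and $J_n=\varphi(\bar S_n)$. The core step is the one-sided estimate $\mathbb{\hat{E}}[J_{k+1}]\le \mathbb{\hat{E}}[J_k]+Cn^{-1-\alpha/2}$, uniformly in $k$. Because $(X_{k+1},Y_{k+1})$ is independent of $(X_i,Y_i)_{i\le k}$, this reduces, via iteration of the sublinear expectation, to the pointwise bound
$$h(x):=\mathbb{\hat{E}}\bigl[V\bigl(1-\tfrac{k+1}{n},\,x+X_{k+1}/\sqrt n+Y_{k+1}/n\bigr)\bigr]\le V\bigl(1-\tfrac{k}{n},x\bigr)+Cn^{-1-\alpha/2}.$$
Taylor-expanding $V$ about $(1-k/n,x)$, the first-order-in-$X$ contribution $\langle D_xV,\,X_{k+1}/\sqrt n\rangle$ carries no mean uncertainty because $\mathbb{\hat{E}}[X_1]=\mathbb{\hat{E}}[-X_1]=0$, so by Proposition \ref{Prop-X+Y} it may be dropped from the sublinear expectation exactly. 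What survives at order $1/n$ is
$$\tfrac{1}{n}\bigl(-\partial_tV+G(D_xV,D_x^2V)\bigr)(1-\tfrac{k}{n},x),$$
which vanishes by the $G$-heat equation. The Taylor remainder and the sub-leading cross terms, of orders $|X||Y|/n^{3/2}$ and $|Y|^2/n^2$, are absorbed into $O(n^{-1-\alpha/2})$ provided $V\in C^{1+\alpha/2,\,2+\alpha}$ and $(X_1,Y_1)$ admits $(2+\alpha)$-th moments. Summing over $k$ yields $|\mathbb{\hat{E}}[\varphi(\bar S_n)]-V(1,0)|=O(n^{-\alpha/2})$, and the reverse inequality is obtained by applying the same argument to $-\varphi$.

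The main obstacle is regularity of $V$: a priori the viscosity solution of the fully nonlinear equation (\ref{eq-G-heat}) need not be smooth enough to justify the Taylor expansion above. When the covariance set $\hat\Theta$ of (\ref{Theta}) is uniformly positive, the $G$-heat equation is uniformly parabolic and the required interior $C^{1+\alpha/2,\,2+\alpha}$ bound follows from the fully nonlinear Schauder theory of Krylov \cite{Krylov}. In the general, possibly degenerate, case I would approximate $G$ by $G^{\varepsilon}(p,A):=G(p,A)+\tfrac{\varepsilon}{2}\mathrm{tr}(A)$ (equivalently, add an independent Gaussian perturbation $\varepsilon Z_i$ to each $X_i$ on an enlarged product space), apply the result for the regularized, non-degenerate problem, and then let $\varepsilon\downarrow 0$ using stability of viscosity solutions of (\ref{eq-G-heat}) with respect to $G$. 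The extension from $\varphi\in C_{b.Lip}$ to continuous $\varphi$ of polynomial growth is then a routine truncation argument based on a priori moment estimates for $\bar S_n$ inherited from those of $(X_1,Y_1)$.
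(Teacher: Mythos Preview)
Your approach is essentially the paper's: the same PDE interpolation, the same Taylor-expansion cancellation using the zero-mean hypothesis and the $G$-heat equation, the same Gaussian perturbation $G^{\varepsilon}(p,A)=G(p,A)+\tfrac{\varepsilon^{2}}{2}\mathrm{tr}(A)$ to handle degeneracy, and the same truncation to pass from $C_{b.Lip}$ to polynomial growth.

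There is, however, one technical point you gloss over, and as written the argument would fail there. Your interpolation $J_{k}=V(1-k/n,\bar S_{k})$ requires Taylor-expanding $V$ at times $1-k/n$ for $k$ up to $n-1$, i.e.\ at times as small as $1/n$. Krylov's interior $C^{1+\alpha/2,\,2+\alpha}$ estimate holds only on sets bounded away from the initial time; since $\varphi$ is merely Lipschitz, $D^{2}V$ and $\partial_{t}V$ generally blow up as $t\to 0^{+}$, so the H\"older constants entering your remainder bound $Cn^{-1-\alpha/2}$ are not uniform in $k$, and the sum over $k$ need not be $O(n^{-\alpha/2})$. The paper avoids this by a small but essential device: it solves the (time-reversed) equation on $[0,1+h]$ with data at $t=1+h$, so that the telescoping sum lives entirely on $[0,1]$, which is genuinely interior, and the bound $\|V\|_{C^{1+\alpha/2,\,2+\alpha}([0,1]\times\mathbb R^{d})}<\infty$ is legitimate. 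The residual mismatches between $V(1,\bar S_{n})$ and $\varphi(\bar S_{n})=V(1+h,\bar S_{n})$, and between $V(0,0)$ and $V(h,0)=\widetilde{\mathbb E}[\varphi(\xi+\zeta)]$, are then controlled by the elementary time-H\"older estimate $|V(t,x)-V(t',x)|\le C(\sqrt{|t-t'|}+|t-t'|)$ and removed by sending $h\to 0$ after $n\to\infty$. Your scheme can be repaired in exactly this way.
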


\begin{corollary}
The sum $\sum_{i=1}^{n}\frac{X_{i}}{\sqrt{n}}$ converges in law to
$\mathcal{N}(0,\hat{\Theta})$, where the subset $\hat{\Theta}\subset
\mathbb{S}_{+}(d)$ is defined in (\ref{Theta}) for $\hat{G}(A)=G(0,A)$,
$A\in \mathbb{S}(d)$. The sum $\sum_{i=1}^{n}\frac{Y_{i}}{n}$ converges in law
to $\mathcal{U}(\bar{\Theta})$, where the subset $\bar{\Theta}\subset
\mathbb{R}^{d}$ is defined in (\ref{thetabar}) for $\bar{G}(p)=G(p,0)$,
$p\in \mathbb{R}^{d}$. If we take in particular $\varphi(y)=d_{\bar{\Theta}%
}(y)=\inf \{|x-y|:x\in \bar{\Theta}\}$, then by (\ref{supOmega}) we have the
following generalized law of large number:%
\begin{equation}
\lim_{n\rightarrow \infty}\mathbb{\hat{E}}[d_{\bar{\Theta}}(\sum_{i=1}^{n}%
\frac{Y_{i}}{n})]=\sup_{\theta \in \bar{\Theta}}d_{\bar{\Theta}}(\theta)=0.
\label{LLN}%
\end{equation}

\end{corollary}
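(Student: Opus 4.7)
The plan is to derive all three assertions as direct specializations of Theorem \ref{CLT}, combined with the characterizations of $\mathcal{N}(0,\hat\Theta)$ and $\mathcal{U}(\bar\Theta)$ obtained in Examples \ref{Prop-G0} and \ref{Prop-G1}. No new convergence argument is needed; the only thing to organize is which pair of random variables is fed into the CLT for each claim.

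First, for the statement about $\sum_{i=1}^n X_i/\sqrt n$, I would apply Theorem \ref{CLT} to the sequence of pairs $(X_i,0)$. This sequence inherits i.i.d.-ness and the zero-mean condition from the hypotheses on $\{X_i\}$, and the associated sublinear function becomes
\[
G(p,A)=\hat{\mathbb{E}}\Bigl[\langle p,0\rangle+\tfrac12\langle AX_1,X_1\rangle\Bigr]=\hat G(A),
\]
independent of $p$. The corresponding $G$-distributed pair $(\xi,\zeta)$ then forces $\zeta=0$ and $\xi\overset d= \mathcal N(0,\hat\Theta)$ by Example \ref{Prop-G0}. Equation (\ref{e12}) yields $\hat{\mathbb{E}}[\varphi(\sum X_i/\sqrt n)]\to \widetilde{\mathbb{E}}[\varphi(\xi)]$, which is exactly convergence in law to $\mathcal N(0,\hat\Theta)$.

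Symmetrically, for $\sum_{i=1}^n Y_i/n$ I would apply Theorem \ref{CLT} to the pairs $(0,Y_i)$. The centering hypothesis $\hat{\mathbb{E}}[X_1]=\hat{\mathbb{E}}[-X_1]=0$ is trivially satisfied, and now
\[
G(p,A)=\hat{\mathbb{E}}[\langle p,Y_1\rangle]=\bar G(p),
\]
independent of $A$. The $G$-distributed pair collapses to $\xi=0$ and $\zeta\overset d=\mathcal U(\bar\Theta)$ by Example \ref{Prop-G1}. So (\ref{e12}) gives convergence in law of $\sum Y_i/n$ to $\mathcal U(\bar\Theta)$.

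For the generalized law of large numbers, I would just specialize the previous step to the test function $\varphi(y)=d_{\bar\Theta}(y)$. Since $d_{\bar\Theta}$ is $1$-Lipschitz, it lies in $C(\mathbb R^d)$ with (in fact linear) polynomial growth, so (\ref{e12}) applies and yields
\[
\lim_{n\to\infty}\hat{\mathbb{E}}\bigl[d_{\bar\Theta}(\textstyle\sum_{i=1}^n Y_i/n)\bigr]=\widetilde{\mathbb{E}}[d_{\bar\Theta}(\zeta)].
\]
By the explicit representation (\ref{supOmega}) of $\mathcal U(\bar\Theta)$ from Example \ref{Prop-G1}, the right-hand side equals $\sup_{\omega\in\bar\Theta}d_{\bar\Theta}(\omega)$, which is $0$ since every point of $\bar\Theta$ has zero distance to $\bar\Theta$. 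This is (\ref{LLN}). There is no serious obstacle in this corollary; the only point worth a sentence is verifying that the polynomial-growth hypothesis of (\ref{e12}) covers the 1-Lipschitz function $d_{\bar\Theta}$, so that the LLN is a legitimate instance of the CLT rather than an independent claim.
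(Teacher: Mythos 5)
Your proposal is correct and is essentially the argument the paper intends: the corollary is stated without a separate proof precisely because it is the specialization of Theorem \ref{CLT} to the pairs $(X_i,0)$ and $(0,Y_i)$, with the limit laws identified via the uniqueness of the $G$-distribution (Corollary \ref{gG-P1coro}) and the representations in Examples \ref{Prop-G1} and \ref{Prop-G0}, and with (\ref{LLN}) obtained by evaluating (\ref{supOmega}) at the $1$-Lipschitz, hence polynomially growing, test function $d_{\bar{\Theta}}$. Nothing further is needed.
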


\begin{remark}
If $Y_{i}$ has no mean-uncertainty, or in other words, $\bar{\Theta}$ is a
singleton: $\bar{\Theta}=\{ \bar{\theta}\}$ then (\ref{LLN}) becomes
\[
\lim_{n\rightarrow \infty}\mathbb{\hat{E}}[|\sum_{i=1}^{n}\frac{Y_{i}}{n}%
-\bar{\theta}|]=0.
\]
To our knowledge, the law of large numbers with non-additive probability
measures have been investigated with a quite different framework and approach
from ours (see \cite{Marinacci}, \cite{Marinacci1}).
\end{remark}

To prove this theorem we first give

\begin{lemma}
\label{Lem-CLT}We \ assume the same condition as Theorem \ref{CLT}. We assume
furthermore that there exists $\beta>0$ such that, for each $A$, $\bar{A}%
\in \mathbb{S}(d)$ with $A\geq \bar{A}$, we have {\ }%
\begin{equation}
\mathbb{\hat{E}}[\left \langle AX_{1},X_{1}\right \rangle ]-\mathbb{\hat{E}%
}[\left \langle \bar{A}X_{1},X_{1}\right \rangle ]\geq \beta \text{tr}[A-\bar{A}].
\label{Ellip}%
\end{equation}
Then (\ref{e12}) holds.
\end{lemma}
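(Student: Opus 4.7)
The plan is to encode the target $\mathbb{\widetilde{E}}[\varphi(\xi+\zeta)]$ as the value $V(0,0)$ of the backward $G$-heat equation, and then to pass from the $G$-distributed increment $(\xi/\sqrt n, \zeta/n)$ to the discrete one $(X_i/\sqrt n, Y_i/n)$ by comparing the one-step operators that they generate. The crucial observation is that both induce the same leading-order expansion $V(t_{k+1}, x) + \tfrac{1}{n} G(D_x V, D_x^2 V)(t_{k+1}, x) + O(n^{-1-\alpha/2})$, so monotonicity of $\mathbb{\hat{E}}$ will upgrade this pointwise identity into a two-sided bound on the global error.

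First I set $V(t,x) := \mathbb{\widetilde{E}}[\varphi(x + \sqrt{1-t}\,\xi + (1-t)\,\zeta)]$ on $[0,1] \times \mathbb{R}^d$. By Proposition \ref{gG-P1} (applied with time reversed), $V$ is the unique viscosity solution of $\partial_t V + G(D_x V, D_x^2 V) = 0$ with $V(1,\cdot) = \varphi$, so $V(0,0) = \mathbb{\widetilde{E}}[\varphi(\xi+\zeta)]$ is exactly the target of (\ref{e12}). Hypothesis (\ref{Ellip}) renders the equation uniformly parabolic; after mollifying $\varphi$ to $\varphi_\varepsilon \in C_b^{\infty}(\mathbb{R}^d)$, Krylov's $C^{1+\alpha/2,\,2+\alpha}$ interior estimate upgrades the corresponding $V_\varepsilon$ to a classical solution whose time derivative and first and second spatial derivatives are uniformly H\"older, with polynomial growth in $x$ inherited via (\ref{|x-y|})-(\ref{|s|}).

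Next, set $t_k := k/n$, $S_0 := 0$, $S_k := \sum_{i=1}^{k}(X_i/\sqrt n + Y_i/n)$, and
\[
\tilde f_k(x) := \mathbb{\hat{E}}\!\left[V\!\left(t_{k+1},\, x + \tfrac{X_{k+1}}{\sqrt n} + \tfrac{Y_{k+1}}{n}\right)\right].
\]
Taylor-expanding $V(t_{k+1}, x + \delta)$ around $\delta = 0$ and processing the result under $\mathbb{\hat{E}}$ gives three effects: the linear piece $\tfrac{1}{\sqrt n}\langle D_x V, X_{k+1}\rangle$ has zero mean uncertainty (by $\mathbb{\hat{E}}[X_1] = \mathbb{\hat{E}}[-X_1] = 0$) and is absorbed via Proposition \ref{Prop-X+Y}; the $O(1/n)$ pair $\tfrac{1}{n}\bigl[\langle D_x V, Y_{k+1}\rangle + \tfrac{1}{2}\langle D_x^2 V\, X_{k+1}, X_{k+1}\rangle\bigr]$ has $\mathbb{\hat{E}}$ equal to $\tfrac{1}{n} G(D_x V, D_x^2 V)(t_{k+1}, x)$ by the very definition of $G$; and the higher-order remainder is bounded pointwise by $C n^{-1-\alpha/2}(1+|x|^p)(1+|X_{k+1}|^q+|Y_{k+1}|^q)$ thanks to the $C^{2+\alpha}$ estimate on $D_x^2 V$ and the assumed moments of $(X_1, Y_1)$, integrating to $O(n^{-1-\alpha/2}(1+|x|^p))$. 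This yields a two-sided bound $\tilde f_k(x) = V(t_{k+1},x) + \tfrac{1}{n}G(D_x V, D_x^2 V)(t_{k+1},x) + O(n^{-1-\alpha/2}(1+|x|^p))$; the backward PDE plus Taylor expansion in $t$ gives the same leading expansion for $V(t_k, x)$, hence
\[
|\tilde f_k(x) - V(t_k, x)| \leq C n^{-1-\alpha/2}(1+|x|^p).
\]

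Independence of $(X_{k+1}, Y_{k+1})$ from $S_k$ gives $\mathbb{\hat{E}}[V(t_{k+1}, S_{k+1})] = \mathbb{\hat{E}}[\tilde f_k(S_k)]$, and sub-additivity with monotonicity of $\mathbb{\hat{E}}$ lifts the pointwise bound to $|\mathbb{\hat{E}}[V(t_{k+1}, S_{k+1})] - \mathbb{\hat{E}}[V(t_k, S_k)]| \leq C n^{-1-\alpha/2}(1 + \mathbb{\hat{E}}[|S_k|^p])$. A uniform estimate $\sup_{k \leq n} \mathbb{\hat{E}}[|S_k|^p] \leq C_p$ follows by iterating independence with Proposition \ref{Prop-X+Y}, and telescoping over $k = 0, \ldots, n-1$ gives $|\mathbb{\hat{E}}[\varphi(\bar S_n)] - V(0,0)| = O(n^{-\alpha/2}) \to 0$, which is (\ref{e12}) for the mollified $\varphi_\varepsilon$; a standard density argument extends this to all $\varphi \in C(\mathbb{R}^d)$ of polynomial growth. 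The main obstacle is the regularity step: promoting $V$ (built from only continuous, polynomially-growing Cauchy data) to a $C^{1+\alpha/2,\,2+\alpha}$ classical solution with controlled growth on the unbounded domain $[0,1] \times \mathbb{R}^d$. This is precisely where (\ref{Ellip}) and the Krylov estimate are essential, since without $C^{2+\alpha}$ regularity in space neither the Taylor expansion of the one-step operator nor the identification of its leading term with $G$ would be justified; once this regularity is secured, the rest is routine sublinear-expectation calculus.
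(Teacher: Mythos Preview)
Your approach matches the paper's: encode the target via the backward $G$-heat equation, Taylor-expand each discrete increment, identify the leading $O(1/n)$ term with $G(DV,D^2V)$ via the definition of $G$ and Proposition~\ref{Prop-X+Y}, then telescope. The organization differs only cosmetically---you compare one-step operators $\tilde f_k$ to $V(t_k,\cdot)$, while the paper splits each telescoping increment $V((i+1)\delta,\bar S_{i+1})-V(i\delta,\bar S_i)$ into a ``good'' piece $J_\delta^i$ with $\mathbb{\hat E}[\sum_i J_\delta^i]=0$ and a remainder $I_\delta^i$ of size $O(\delta^{1+\alpha/2})$.

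The one substantive difference is the regularity step, and here your version has a gap. You mollify $\varphi$ and then invoke Krylov's \emph{interior} estimate; but interior estimates only control the $C^{1+\alpha/2,2+\alpha}$ norm on subsets at positive distance from the terminal time $t=1$, whereas your Taylor expansion must be performed at every $t_{k+1}=(k+1)/n$, including those arbitrarily close to $1$, so the constants would blow up. The paper sidesteps this by a time shift rather than by mollification: it fixes a small $h>0$, prescribes $V|_{t=1+h}=\varphi$ (for $\varphi\in C_{b.Lip}$), and solves on $[0,1+h]$; then $[0,1]\times\mathbb{R}^d$ is genuinely interior, and the Krylov estimate yields a \emph{uniformly bounded} $C^{1+\alpha/2,2+\alpha}$ norm there---so no polynomial weight $(1+|x|^p)$ appears in the error and no bound on $\sup_k\mathbb{\hat E}[|S_k|^p]$ is needed. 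One then bridges $V(0,0)\leftrightarrow V(h,0)=\widetilde{\mathbb E}[\varphi(\xi+\zeta)]$ and $V(1,\bar S_n)\leftrightarrow V(1+h,\bar S_n)=\varphi(\bar S_n)$ via the uniform time continuity $|V(t,x)-V(t',x)|\le C(\sqrt{|t-t'|}+|t-t'|)$, and lets $h\to0$ at the end. The passage from $C_{b.Lip}$ to polynomial-growth $\varphi$ is handled by a separate density lemma, not inside this proof.
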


\begin{proof}
We first prove (\ref{e12}) for $\varphi \in C_{b.Lip}(\mathbb{R}^{d})$. {For a
small but fixed $h>0$, let $V$ be the unique viscosity solution of}%
\begin{equation}
\partial_{t}V+G(DV,D^{2}V)=0,\ (t,x)\in \lbrack0,1+h]\times \mathbb{R}%
^{d}\text{,}\  \ V|_{t=1+h}=\varphi. \label{e14}%
\end{equation}
{Since }$(\xi,\zeta)$ is $G$-distributed{ we have }%
\begin{equation}
V(h,0)=\widetilde{\mathbb{E}}[\varphi(\xi+\zeta)],\  \ V(1+h,x)=\varphi(x)
\label{equ-h}%
\end{equation}
{ Since (\ref{e14}) is a uniformly parabolic PDE and $G$ is a convex function
thus, by the interior regularity of $V$ (see Krylov \cite{Krylov}, Example
6.1.8 and Theorem 6.2.3), we have
\[
\left \Vert V\right \Vert _{C^{1+\alpha/2,2+\alpha}([0,1]\times \mathbb{R}^{d}%
)}<\infty,\  \text{for some }\alpha \in(0,1).
\]
We set $\delta=\frac{1}{n}$ and $S_{0}=0$. Then \
\begin{align*}
&  V(1,\bar{S}_{n})-V(0,0)=\sum_{i=0}^{n-1}\{V((i+1)\delta,\bar{S}%
_{i+1})-V(i\delta,\bar{S}_{i})\} \\
&  =\sum_{i=0}^{n-1}{\large \{}[V((i+1)\delta,\bar{S}_{i+1})-V(i\delta,\bar
{S}_{i+1})]+[V(i\delta,\bar{S}_{i+1})-V(i\delta,\bar{S}_{i})]{\large \}}\\
&  =\sum_{i=0}^{n-1}\left \{  I_{\delta}^{i}+J_{\delta}^{i}\right \}
\end{align*}
with, by Taylor's expansion,%
\[
J_{\delta}^{i}=\partial_{t}V(i\delta,\bar{S}_{i})\delta+\frac{1}%
{2}\left \langle D^{2}V(i\delta,\bar{S}_{i})X_{i+1},X_{i+1}\right \rangle
\delta+\left \langle DV(i\delta,\bar{S}_{i}),X_{i+1}\sqrt{\delta}+Y_{i+1}%
\delta \right \rangle
\]%
\begin{align*}
&  I_{\delta}^{i}=\int_{0}^{1}[\partial_{t}V((i+\beta)\delta,\bar{S}%
_{i+1})-\partial_{t}V(i\delta,\bar{S}_{i+1})]d\beta \delta+[\partial
_{t}V(i\delta,\bar{S}_{i+1})-\partial_{t}V(i\delta,\bar{S}_{i})]\delta \\
&  +\frac{1}{2}\left \langle D^{2}V(i\delta,\bar{S}_{i})X_{i+1},Y_{i+1}%
\right \rangle \delta^{3/2}+\frac{1}{2}\left \langle D^{2}V(i\delta,\bar{S}%
_{i})Y_{i+1},Y_{i+1}\right \rangle \delta \\
&  +\int_{0}^{1}\int_{0}^{1}\left \langle \Theta_{\beta \gamma}^{i}(X_{i+1}%
\sqrt{\delta}+Y_{i+1}\delta),X_{i+1}\sqrt{\delta}+Y_{i+1}\delta \right \rangle
\gamma d\beta d\gamma
\end{align*}
with}%
\[
\Theta_{\beta \gamma}^{i}=D^{2}V(i\delta,\bar{S}_{i}+\gamma \beta(X_{i+1}%
\sqrt{\delta}+Y_{i+1}\delta)-D^{2}V(i\delta,\bar{S}_{i}).
\]
{Thus
\begin{equation}
\mathbb{\hat{E}}[\sum_{i=0}^{n-1}J_{\delta}^{i}]-\mathbb{\hat{E}}[-\sum
_{i=0}^{n-1}I_{\delta}^{i}]\leq \mathbb{\hat{E}}[V(1,\bar{S}_{n})]-V(0,0)\leq
\mathbb{\hat{E}}[\sum_{i=0}^{n-1}J_{\delta}^{i}]+\mathbb{\hat{E}}[\sum
_{i=0}^{n-1}I_{\delta}^{i}]. \label{e15}%
\end{equation}
We now prove that $\mathbb{\hat{E}}[\sum_{i=0}^{n-1}J_{\delta}^{i}]=0$. For
the 3rd term of $J_{\delta}^{i}$ we have:
\[
\mathbb{\hat{E}}[\left \langle DV(i\delta,\bar{S}_{i}),X_{i+1}\sqrt{\delta
}\right \rangle ]=\mathbb{\hat{E}}[-\left \langle DV(i\delta,\bar{S}%
_{i}),X_{i+1}\sqrt{\delta}\right \rangle ]=0.
\]
For the second term, we have, from the definition of the function }$G$,{
\[
\mathbb{\hat{E}}[J_{\delta}^{i}]=\mathbb{\hat{E}}[\partial_{t}V(i\delta
,\bar{S}_{i})+G(DV(i\delta,\bar{S}_{i}),D^{2}V(i\delta,\bar{S}_{i}))]\delta.
\]
We then combine the above two equalities with $\partial_{t}V+G(DV,D^{2}V)=0$
as well as the independence of $(X_{i+1},Y_{i+1})$ to $\{(X_{1},Y_{1}%
),\cdots,(X_{i},Y_{i})\}$, it follows that
\[
\mathbb{\hat{E}}[\sum_{i=0}^{n-1}J_{\delta}^{i}]=\mathbb{\hat{E}}[\sum
_{i=0}^{n-2}J_{\delta}^{i}]=\cdots=0.
\]
Thus (\ref{e15}) can be rewritten as%
\[
-\mathbb{\hat{E}}[-\sum_{i=0}^{n-1}I_{\delta}^{i}]\leq \mathbb{\hat{E}%
}[V(1,\bar{S}_{n})]-V(0,0)\leq \mathbb{\hat{E}}[\sum_{i=0}^{n-1}I_{\delta}%
^{i}].
\]
But since both $\partial_{t}V$ and $D^{2}V$ are uniformly $\alpha$-h\"{o}lder
continuous in $x$ and\ $\frac{\alpha}{2}$-h\"{o}lder continuous in $t$ on
$[0,1]\times \mathbb{R}$, we then have }%
\[
{|I_{\delta}^{i}|\leq C\delta^{1+\alpha/2}[1+|X_{i+1}|^{2+\alpha}%
+|Y_{1}|^{2+\alpha}].}%
\]
{ It follows that
\[
\mathbb{\hat{E}}[|I_{\delta}^{i}|]\leq C\delta^{1+\alpha/2}(1+\mathbb{\hat{E}%
}[|X_{1}|^{2+\alpha}+|Y_{1}|^{2+\alpha}]).
\]
Thus%
\begin{align*}
-C(\frac{1}{n})^{\alpha/2}(1+\mathbb{\hat{E}}[|X_{1}|^{2+\alpha}%
+|Y_{1}|^{2+\alpha}])  &  \leq \mathbb{\hat{E}}[V(1,\bar{S}_{n})]-V(0,0)\\
&  \leq C(\frac{1}{n})^{\alpha/2}(1+\mathbb{\hat{E}}[|X_{1}|^{2+\alpha}%
+|Y_{1}|^{2+\alpha}]).
\end{align*}
As $n\rightarrow \infty$ we have
\begin{equation}
\lim_{n\rightarrow \infty}\mathbb{\hat{E}}[V(1,\bar{S}_{n})]=V(0,0).
\label{equ-0}%
\end{equation}
On the other hand, for each $t,t^{\prime}\in \lbrack0,1+h]$ and $x\in
\mathbb{R}^{d}$, we have
\begin{align*}
|V(t,x)-V(t^{\prime},x)|  &  \leq C(\sqrt{|t-t^{\prime}|}.+|t-t^{\prime}|).\\
&  ,
\end{align*}
Thus $|V(0,0)-V(h,0)|\leq C$}$(\sqrt{h}+h)${ and, by (\ref{equ-0}),
\[
|\mathbb{\hat{E}}[V(1,\bar{S}_{n})]-\mathbb{\hat{E}}[\varphi(\bar{S}%
_{n})]|=|\mathbb{\hat{E}}[V(1,\bar{S}_{n})]-\mathbb{\hat{E}}[V(1+h,\bar{S}%
_{n})]|\leq C(\sqrt{h}+h).
\]
It follows form (\ref{equ-h}) and (\ref{equ-0}) that
\[
\limsup_{n\rightarrow \infty}|\mathbb{\hat{E}}[\varphi(\bar{S}_{n}%
)]-\widetilde{\mathbb{E}}[\varphi(\xi+\zeta)]|\leq2C(\sqrt{h}+h).
\]
Since $h$ can be arbitrarily small we thus have%
\[
\lim_{n\rightarrow \infty}\mathbb{\hat{E}}[\varphi(\bar{S}_{n})]=\widetilde
{\mathbb{E}}[\varphi(\xi)].
\]
}
\end{proof}

We now give

\  \

\noindent \textbf{Proof of Theorem \ref{CLT}.} For the case when the uniform
Elliptic condition \ref{Ellip} does not hold, we first follow an idea of Song
\cite{SongY} to\textbf{ }introduce a perturbation to prove the above
convergence for $\varphi \in C_{b.Lip}(\mathbb{R}^{d})$. According to
Definition \ref{Def-2-10} and Proposition \ref{Prop-2-10} we can construct a
sublinear expectation space $(\bar{\Omega},\mathcal{\bar{H}},\mathbb{\bar{E}%
)}$ and a sequence of three random vectors $\{(\bar{X}_{i},\bar{Y}_{i}%
,\bar{\eta}_{i})\}_{i=1}^{\infty}$ such that, for each $n=1,2,\cdots$,
$\{(\bar{X}_{i},\bar{Y}_{i})\}_{i=1}^{n}\overset{d}{=}\{(X_{i},Y_{i}%
)\}_{i=1}^{n}$ and $(\bar{X}_{n+1},\bar{Y}_{n+1},\bar{\eta}_{n+1})$ is
independent to $\{(\bar{X}_{i},\bar{Y}_{i},\bar{\eta}_{i})\}_{i=1}^{n}$ and,
moreover,
\[
\mathbb{\bar{E}}[\psi(X_{i},Y_{i},\eta_{i})]=\frac{1}{\sqrt{2\pi d}}%
\int_{\mathbb{R}^{d}}\mathbb{\hat{E}}[\psi(X_{i},Y_{i},x)]e^{-\frac{|x|^{2}%
}{2}}dx,\  \  \forall \psi \in C_{l.Lip}(\mathbb{R}^{3\times d}).
\]
We then use the following perturbation $X_{i}^{\varepsilon}=X_{i}%
+\varepsilon \eta_{i}$ for a fixed $\varepsilon>0$. It is seen that the
sequence $\{(\bar{X}_{i}^{\varepsilon},\bar{Y}_{i})\}_{i=1}^{\infty}$
satisfies all conditions in the above CLT, in particular%
\[
G_{\varepsilon}(p,A):=\mathbb{\bar{E}}[\frac{1}{2}\left \langle A\bar{X}%
_{1}^{\varepsilon},\bar{X}_{1}^{\varepsilon}\right \rangle +\left \langle
p,\bar{Y}_{1}\right \rangle ]=G(p,A)+\frac{\varepsilon^{2}}{2}\text{tr}[A].
\]
Thus it is strictly elliptic. We then can apply Lemma \ref{Lem-CLT} to%
\[
\bar{S}_{n}^{\varepsilon}:=\sum_{i=1}^{n}(\frac{X_{i}^{\varepsilon}}{\sqrt{n}%
}+\frac{Y_{i}}{n})=\bar{S}_{n}+\varepsilon J_{n},\  \ J_{n}=\sum_{i=1}^{n}%
\frac{\eta_{i}}{\sqrt{n}}%
\]
and obtain%
\[
\lim_{n\rightarrow \infty}\mathbb{\hat{E}}[\varphi(\bar{S}_{n}^{\varepsilon
})]=\widetilde{\mathbb{E}}[\varphi(\xi+\zeta+\varepsilon \eta)].
\]
where $(\xi,\zeta)$ is $G$-distributes and
\[
\widetilde{\mathbb{E}}[\psi(\xi+\zeta,\eta)]=\frac{1}{\sqrt{2\pi d}}%
\int_{\mathbb{R}^{d}}\widetilde{\mathbb{E}}[\psi(\xi+\zeta,x)]e^{\frac
{-|x|^{2}}{2}}dx,\  \  \psi \in C_{l.Lip}(\mathbb{R}^{2d}).
\]
Thus $(\xi+\varepsilon \eta,\zeta)$ is $G_{\varepsilon}$-distributed. But we
have%
\begin{align*}
|\mathbb{\hat{E}}[\varphi(\bar{S}_{n})]-\mathbb{\hat{E}}[\varphi(\bar{S}%
_{n}^{\varepsilon})]| &  =|\mathbb{\hat{E}}[\varphi(\bar{S}_{n})]-\mathbb{\hat
{E}}[\varphi(\bar{S}_{n}+\varepsilon J_{n})]|\\
&  \leq \varepsilon C\mathbb{\hat{E}}[|J_{n}|]\leq C\varepsilon
\end{align*}
and similarly $|\widetilde{\mathbb{E}}[\varphi(\xi)]-\widetilde{\mathbb{E}%
}[\varphi(\xi+\varepsilon \eta)]|\leq C\varepsilon$. Sine $\varepsilon$ can be
arbitrarily small, it follows that
\[
\lim_{n\rightarrow \infty}\mathbb{\hat{E}}[\varphi(\bar{S}_{n})]=\widetilde
{\mathbb{E}}[\varphi(\xi+\zeta)],\  \  \forall \varphi \in C_{b.Lip}%
(\mathbb{R}^{d}).
\]

On the other hand, it is easy to check that $\sup_{n}\mathbb{\hat{E}[}|\bar
{S}_{n}|]+\widetilde{\mathbb{E}}[|\xi+\zeta|]<\infty$. {We then can apply the
following lemma to prove that the above convergence holds for the case where
$\varphi$ in $C(\mathbb{R}^{d})$ with a polynomial growth condition. The proof
is complete.}

\begin{lemma}
Let $(\hat{\Omega},\mathcal{\hat{H}},\mathbb{\hat{E}})$ and $(\widetilde
{\Omega},\mathcal{\widetilde{H}},\mathbb{\widetilde{E}})$ be two sublinear
expectation space and let $\zeta \in \mathcal{\hat{H}}$ and $\zeta_{n}%
\in \mathcal{\widetilde{H}}$, $n=1,2,\cdots$, be given. We assume that, for a
given $p\geq1$ we have $\sup_{n}\mathbb{\hat{E}}[|Y_{n}|^{p}]+\widetilde
{\mathbb{E}}[|Y|^{p}]\leq C$. If the convergence $\lim_{n\rightarrow \infty
}\mathbb{\hat{E}}[\varphi(Y_{n})]=\widetilde{\mathbb{E}}[\varphi(Y)]$ holds
for each $\varphi \in C_{b.Lip}(\mathbb{R}^{d})$, then it also holds for all
function $\varphi \in C(\mathbb{R}^{d})$ with growth condition $|\varphi
(x)|\leq C(1+|x|^{p-1})$.
\end{lemma}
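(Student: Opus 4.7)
The plan is a standard truncate-and-approximate argument, adapted to the sublinear setting, where the uniform moment hypothesis $\sup_n\hat{\mathbb{E}}[|Y_n|^p]+\widetilde{\mathbb{E}}[|Y|^p]\le C$ is used to make the tail contribution small uniformly in $n$. The only sublinear-specific input is the elementary bound $|\hat{\mathbb{E}}[\xi]-\hat{\mathbb{E}}[\eta]|\le\hat{\mathbb{E}}[|\xi-\eta|]$, which follows directly from sub-additivity and monotonicity.

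First I would choose, for each integer $N\ge 1$, a smooth cut-off $\chi_N\in C_{b.Lip}(\mathbb{R}^d)$ with $0\le\chi_N\le 1$, $\chi_N\equiv 1$ on $\{|x|\le N\}$ and $\chi_N\equiv 0$ on $\{|x|\ge 2N\}$. Because $|\varphi(x)|\le C(1+|x|^{p-1})$ and the function $1-\chi_N$ is supported in $\{|x|\ge N\}$, one gets the pointwise control
\[
|\varphi(x)-\varphi(x)\chi_N(x)|\le C(1+|x|^{p-1})\mathbf{1}_{\{|x|\ge N\}}\le\frac{C'}{N}(|x|+|x|^{p}).
\]
Combined with $|\hat{\mathbb{E}}[\xi]-\hat{\mathbb{E}}[\eta]|\le\hat{\mathbb{E}}[|\xi-\eta|]$ and the uniform moment bound, this yields
\[
\bigl|\hat{\mathbb{E}}[\varphi(Y_n)]-\hat{\mathbb{E}}[(\varphi\chi_N)(Y_n)]\bigr|\le\frac{C''}{N},
\]
uniformly in $n$, and the same estimate for $\widetilde{\mathbb{E}}$ applied to $Y$.

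Next, since $\varphi\chi_N$ is bounded and continuous with compact support, it is uniformly continuous, so by mollification (convolution with a standard smooth bump) I can produce, for each $\delta>0$, a function $\varphi_{N,\delta}\in C_{b.Lip}(\mathbb{R}^d)$ with $\|\varphi\chi_N-\varphi_{N,\delta}\|_\infty\le\delta$. Monotonicity and sub-additivity give
\[
\bigl|\hat{\mathbb{E}}[(\varphi\chi_N)(Y_n)]-\hat{\mathbb{E}}[\varphi_{N,\delta}(Y_n)]\bigr|\le\delta,
\]
and likewise for $\widetilde{\mathbb{E}}$.

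Finally I would assemble the three estimates via the triangle inequality: given $\varepsilon>0$, choose $N$ so that $C''/N<\varepsilon/4$, then choose $\delta<\varepsilon/4$, and only then invoke the hypothesized convergence on the specific function $\varphi_{N,\delta}\in C_{b.Lip}(\mathbb{R}^d)$ to pick $n_0$ with $|\hat{\mathbb{E}}[\varphi_{N,\delta}(Y_n)]-\widetilde{\mathbb{E}}[\varphi_{N,\delta}(Y)]|<\varepsilon/4$ for all $n\ge n_0$. Summing the four error contributions gives $|\hat{\mathbb{E}}[\varphi(Y_n)]-\widetilde{\mathbb{E}}[\varphi(Y)]|<\varepsilon$. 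The only mildly delicate point is that the truncation and mollification errors must be controlled \emph{uniformly in $n$}; this uniformity is exactly what the hypothesis $\sup_n\hat{\mathbb{E}}[|Y_n|^p]\le C$ delivers, so no real obstacle arises.
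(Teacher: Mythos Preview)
Your proposal is correct and follows essentially the same route as the paper: a truncation by a cut-off supported in $\{|x|\le 2N\}$, with the tail error controlled uniformly in $n$ via the moment bound $\sup_n\hat{\mathbb{E}}[|Y_n|^p]\le C$, followed by uniform approximation of the resulting compactly supported continuous function by a $C_{b.Lip}$ function. The paper organises the two steps in the reverse order (first establishing the result for $\varphi\in C_b$ with compact support, then extending to general $\varphi$ by the splitting $\varphi=\varphi_1+\varphi_2$), but the ingredients and the way the uniform moment hypothesis is used are identical.
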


\begin{proof}
We first prove that the above convergence holds for $\varphi \in C_{b}%
(\mathbb{R}^{d})$ with a compact support. In this case, for each
$\varepsilon>0$, we can find a $\bar{\varphi}\in C_{b.Lip}(\mathbb{R}^{d})$
such that $\sup_{x\in \mathbb{R}^{d}}|\varphi(x)-\bar{\varphi}(x)|\leq
\frac{\varepsilon}{2}$. {We have } { } {
\begin{align*}
&  |\mathbb{\hat{E}}[\varphi(Y_{n})]-\widetilde{\mathbb{E}}[\varphi
(Y)]|\leq|\mathbb{\hat{E}}[\varphi(Y_{n})]-\mathbb{\hat{E}}[\bar{\varphi
}(Y_{n})]|+|\widetilde{\mathbb{E}}[\varphi(Y)]-\widetilde{\mathbb{E}}%
[\bar{\varphi}(Y)]|\\
&  +|\mathbb{\hat{E}}[\bar{\varphi}(Y_{n})]-\widetilde{\mathbb{E}}%
[\bar{\varphi}(Y_{n})]|\leq \varepsilon+|\mathbb{\hat{E}}[\bar{\varphi}%
(Y_{n})]-\widetilde{\mathbb{E}}[\bar{\varphi}(Y)]|.
\end{align*}
Thus }$\limsup_{n\rightarrow \infty}|\mathbb{\hat{E}}[\varphi(Y_{n}%
)]-\widetilde{\mathbb{E}}[\varphi(Y)]|\leq \varepsilon$. The convergence must
hold since $\varepsilon$ can be arbitrarily small.

Now let $\varphi$ be an arbitrary $C_{b}(\mathbb{R}^{n})$-function. For each
$N>0$ we can find $\varphi_{1},\varphi_{2}\in C_{b}(\mathbb{R}^{d})$ such that
$\varphi=\varphi_{1}+\varphi_{2}$ where $\varphi_{1}$ has a compact support
and $\varphi_{2}(x)=0$ for $|x|\leq N$, and $|\varphi_{2}(x)|\leq|\varphi(x)|$
for all $x$. It is clear that
\[
|\varphi_{2}(x)|\leq \frac{\bar{C}(1+|x|^{p})}{N},\  \  \forall
x,\  \  \text{where}\  \  \bar{C}=\sup_{x\in \mathbb{R}^{d}}|\varphi(x)|.
\]
Thus
\begin{align*}
|\mathbb{\hat{E}}[\varphi(Y_{n})]-\widetilde{\mathbb{E}}[\varphi(Y)]|  &
=|\mathbb{\hat{E}}[\varphi_{1}(Y_{n})+\varphi_{2}(Y_{n})]-\widetilde
{\mathbb{E}}[\varphi_{1}(Y)+\varphi_{2}(Y)]|\\
&  \leq|\mathbb{\hat{E}}[\varphi_{1}(Y_{n})]-\widetilde{\mathbb{E}}%
[\varphi_{1}(Y)]|+\mathbb{\hat{E}}[|\varphi_{2}(Y_{n})|]+\widetilde
{\mathbb{E}}[|\varphi_{2}(Y)|]\\
&  \leq|\mathbb{\hat{E}}[\varphi_{1}(Y_{n})]-\widetilde{\mathbb{E}}%
[\varphi_{1}(Y)]|+\frac{\bar{C}}{N}(\mathbb{\hat{E}}[|Y_{n}|+\widetilde
{\mathbb{E}}[|Y|])\\
&  \leq|\mathbb{\hat{E}}[\varphi_{1}(Y_{n})]-\widetilde{\mathbb{E}}%
[\varphi_{1}(Y)]|+\frac{\bar{C}C}{N}.
\end{align*}
We thus have $\limsup_{n\rightarrow \infty}|\mathbb{\hat{E}}[\varphi
(Y_{n})]-\widetilde{\mathbb{E}}[\varphi(Y)]|\leq \frac{\bar{C}C}{N}$. Since $N$
can be arbitrarily large thus $\mathbb{\hat{E}}[\varphi(Y_{n})]$ must converge
to $\widetilde{\mathbb{E}}[\varphi(Y)]$.
\end{proof}

\section{Appendix: some basic results of viscosity solutions}

We will use the following well-known result in viscosity solution theory (see
Theorem 8.3 of Crandall Ishii and Lions \cite{CIL}).

\begin{theorem}
\label{Thm-8.3} Let $u_{i}\in$USC$((0,T)\times Q_{i})$ for $i=1,\cdots,k$
where $Q_{i}$ is a locally compact subset of $\mathbb{R}^{N_{i}}$. Let
$\varphi$ be defined on an open neighborhood of $(0,T)\times Q_{1}\times
\cdots \times Q_{k}$ and such that $(t,x_{1},\cdots,x_{k})$ is once
continuously differentiable in $t$ and twice continuously differentiable in
$(x_{1},\cdots,x_{k})\in Q_{1}\times \cdots \times Q_{k}$. Suppose that $\hat
{t}\in(0,T)$, $\hat{x}_{i}\in Q_{i}$ for $i=1,\cdots,k$ and
\begin{align*}
w(t,x_{1},\cdots,x_{k})  &  :=u_{1}(t,x_{1})+\cdots+u_{k}(t,x_{k}%
)-\varphi(t,x_{1},\cdots,x_{k})\\
&  \leq w(\hat{t},\hat{x}_{1},\cdots,\hat{x}_{k})
\end{align*}
for $t\in(0,T)$ and $x_{i}\in Q_{i}$. Assume, moreover that there is an $r>0$
such that for every $M>0$ there is a $C$ such that for $i=1,\cdots,k$,%
\begin{equation}%
\begin{array}
[c]{ll}
& b_{i}\leq C\text{ whenever \ }(b_{i},q_{i},X_{i})\in \mathcal{P}^{2,+}%
u_{i}(t,x_{i}),\\
& |x_{i}-\hat{x}_{i}|+|t-\hat{t}|\leq r\text{ and }|u_{i}(t,x_{i}%
)|+|q_{i}|+\left \Vert X_{i}\right \Vert \leq M.
\end{array}
\label{eq8.5}%
\end{equation}
Then for each $\varepsilon>0$, there are $X_{i}\in \mathbb{S}(N_{i})$ such
that\newline \textsl{(i)} $(b_{i},D_{x_{i}}\varphi(\hat{t},\hat{x}_{1}%
,\cdots,\hat{x}_{k}),X_{i})\in \overline{\mathcal{P}}^{2,+}u_{i}(\hat{t}%
,\hat{x}_{i}),\  \ i=1,\cdots,k;$\newline \textsl{(ii)}
\[
-(\frac{1}{\varepsilon}+\left \Vert A\right \Vert )\leq \left[
\begin{array}
[c]{ccc}%
X_{1} & \cdots & 0\\
\vdots & \ddots & \vdots \\
0 & \cdots & X_{k}%
\end{array}
\right]  \leq A+\varepsilon A^{2}%
\]
\textsl{(iii)} $b_{1}+\cdots+b_{k}=\partial_{t}\varphi(\hat{t},\hat{x}%
_{1},\cdots,\hat{x}_{k})$\newline where $A=D^{2}\varphi(\hat{x})\in
\mathbb{S}(N_{1}+\cdots+N_{k})$.
\end{theorem}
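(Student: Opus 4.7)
The plan is to follow the standard Crandall--Ishii--Lions strategy, treating the spatial and temporal variables separately and reducing the problem to a purely spatial maximum principle on semiconvex functions through sup--convolution regularization. First I would rewrite the hypothesis by setting $u(t,x_1,\ldots,x_k) := u_1(t,x_1)+\cdots+u_k(t,x_k)$ and replacing $\varphi$ by $\varphi + \delta|t-\hat t|^2 + \delta \sum_i|x_i-\hat x_i|^2$ for small $\delta>0$, so that $(\hat t,\hat x_1,\ldots,\hat x_k)$ becomes a strict local maximum of $u-\varphi$ while the derivatives of $\varphi$ at $(\hat t,\hat x_1,\ldots,\hat x_k)$ change only by controllable amounts; after the whole argument one lets $\delta \downarrow 0$. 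This standard strictness reduction guarantees that approximating maximizers we construct below concentrate at $(\hat t,\hat x_1,\ldots,\hat x_k)$.

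Next I would approximate each $u_i$ by its parabolic sup--convolution
\[
u_i^{\varepsilon}(t,x_i) := \sup_{(s,y)\in(0,T)\times Q_i}\Bigl\{u_i(s,y) - \tfrac{1}{2\varepsilon}(|t-s|^2+|x_i-y|^2)\Bigr\},
\]
so that $u_i^{\varepsilon}$ is semiconvex (hence twice differentiable a.e.\ by Alexandrov's theorem) and converges to $u_i$ from above. Combined with a quadratic penalty against the test function $\varphi$, the function $w^\varepsilon := \sum_i u_i^{\varepsilon} - \varphi - \beta|\cdot-(\hat t,\hat x)|^{2}$ attains an interior maximum at some $(\hat t_\varepsilon, \hat x^\varepsilon) \to (\hat t, \hat x)$. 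At a point of twice-differentiability of each $u_i^\varepsilon$ close to this maximizer, Alexandrov differentiability yields elements $(b_i^\varepsilon, q_i^\varepsilon, X_i^\varepsilon)$ in the classical parabolic jets $\mathcal{P}^{2,+}u_i^\varepsilon$, and the first-order conditions $\sum q_i^\varepsilon = D_{x_i}\varphi$ and $\sum b_i^\varepsilon = \partial_t\varphi$ plus the second-order condition on the Hessian of $w^\varepsilon$ yield a matrix inequality of the block-diagonal form in (ii) with $A = D^2\varphi(\hat x) + O(\beta)$. The key algebraic step is the ``magic matrix lemma'' of Crandall--Ishii (Theorem 3.2 of \cite{CIL}): given the pointwise inequality $\sum \langle X_i^\varepsilon \xi_i,\xi_i\rangle \le \langle A\xi,\xi\rangle + O(\beta)$, one produces matrices $\tilde X_i$ satisfying the sharp two-sided bound in (ii), which I would then adopt as the $X_i$.

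The hardest step, and the one requiring hypothesis (\ref{eq8.5}), is passing from the regularized jets back to jets of the original $u_i$ while controlling the time components $b_i$. One transfers the jet using the standard identity relating $\mathcal{P}^{2,+}u_i^\varepsilon(t,x)$ to $\mathcal{P}^{2,+}u_i$ at the corresponding sup--convolution maximizer $(s_\varepsilon^i, y_\varepsilon^i)$; because the spatial variables $q_i^\varepsilon$ are bounded uniformly in $\varepsilon$ (they are controlled by $D\varphi$ plus $O(\beta)$), the condition (\ref{eq8.5}) provides the crucial one-sided bound $b_i \le C$ on the time component at the corresponding jet of $u_i$. Combined with $\sum_i b_i = \partial_t\varphi + O(\beta)$, this one-sided control pins down each $b_i$ in a compact set. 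Extracting a convergent subsequence as $\varepsilon \downarrow 0$ (and then $\beta,\delta \downarrow 0$), the triples $(b_i, q_i^\varepsilon, X_i^\varepsilon)$ converge to $(b_i, D_{x_i}\varphi(\hat t,\hat x), X_i)$ at $(\hat t, \hat x_i)$, which by definition places the limit in $\overline{\mathcal{P}}^{2,+}u_i(\hat t, \hat x_i)$. Conclusions (i), (ii), (iii) follow.

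The main obstacle is exactly this last step: without (\ref{eq8.5}) the time component $b_i^\varepsilon$ could blow up and no limiting parabolic semijet would be available; the role of (\ref{eq8.5}) is precisely to supply a one-sided bound that lets the passage to the closure succeed while preserving the matrix inequality. The rest, including the strict-maximum reduction and the magic matrix construction, is routine within the viscosity framework of \cite{CIL}, so my proof proposal essentially amounts to verifying that these standard parabolic adaptations go through in the locally compact setting of the $Q_i$.
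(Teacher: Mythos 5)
The paper does not actually prove this statement: it is quoted as a ``well-known result'' and attributed directly to Theorem 8.3 of Crandall, Ishii and Lions \cite{CIL}, so there is no internal proof to compare yours against. Your sketch is, in substance, the standard argument from that reference --- strictification of the maximum, parabolic sup--convolution, reduction to points of twice differentiability, the Theorem 3.2 ``magic matrix'' construction for the two-sided bound in (ii), and the use of hypothesis (\ref{eq8.5}) together with the constraint $b_1+\cdots+b_k=\partial_t\varphi(\hat t,\hat x)$ to confine the time components to a compact set before passing to the closure $\overline{\mathcal{P}}^{2,+}u_i(\hat t,\hat x_i)$ --- and you correctly identify (\ref{eq8.5}) as the hypothesis whose only role is to prevent the $b_i$ from escaping to $-\infty$ individually. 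The one step you elide is that Alexandrov's theorem only gives twice differentiability almost everywhere, not at the maximizer of $w^\varepsilon$ itself; to produce classical jets you must invoke Jensen's perturbation lemma (adding a small linear term to relocate the maximum onto a point of twice differentiability of positive density), after which the first- and second-order conditions you quote are legitimate. With that standard ingredient supplied, your proposal matches the proof the paper implicitly relies on.
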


Observe that the above conditions (\ref{eq8.5}) will be guaranteed by having
$u_{i}$ be subsolutions of parabolic equations given in the following two
theorems, which is an improved version of the one in the Appendix of
\cite{Peng2007b}.

\begin{theorem}
\textbf{\label{Thm-dom1}}(Domination Theorem) $u_{i}\in$\textrm{USC}%
$([0,T]\times \mathbb{R}^{N})$ be subsolutions of
\begin{equation}
\partial_{t}u-G_{i}(t,x,u,Du,D^{2}u)=0,\  \  \  \ i=1,\cdots,k, \label{visPDE0}%
\end{equation}
on $(0,T)\times \mathbb{R}^{N}$ such that, for given constants $\beta_{i}>0$,
$i=1,\cdots,k$, $\left(  \sum_{i=1}^{k}u_{i}(t,x)\right)  ^{+}\rightarrow0$,
uniformly as $|x|\rightarrow \infty$. We assume that \newline(i) The functions
\[
G_{i}:[0,T]\times \mathbb{R}^{N}\times \mathbb{R}^{N}\times \mathbb{S(}%
N)\mapsto \mathbb{R},\  \  \ i=1,\cdots,k,
\]
are continuous in the following sense: for each $t\in \lbrack0,T)$, $v_{1}$,
$v_{2}\in \mathbb{R}$, $x$, $y$, $p$, $q\in \mathbb{R}^{N}$ and $Y\in
\mathbb{S}(N)$,
\begin{align*}
&  [G_{i}(t,x,v,p,X)-G_{i}(t,y,v,p,X)]^{-}\\
&  \leq \bar{\omega}(1+(T-t)^{-1}+|x|+|y|+|v|)\omega(|x-y|+|p|\cdot|x-y|)
\end{align*}
where $\omega$, $\bar{\omega}:\mathbb{R}^{+}\mapsto \mathbb{R}^{+}$ are given
continuous functions with $\omega(0)=0$. $_{{}}$\newline(ii) Given constants
$\beta_{i}>0$, $i=1,\cdots,k$, the following domination condition holds for
$G_{i}$:
\begin{equation}
\sum_{i=1}^{k}\beta_{i}G_{i}(t,x,v_{i},p_{i},X_{i})\leq0,\  \  \text{ }
\label{domi-cond}%
\end{equation}
for each $(t,x)\in(0,T)\times \mathbb{R}^{N}$ and $(v_{i},p_{i},X_{i}%
)\in \mathbb{R}\times \mathbb{R}^{N}\times \mathbb{S}(N)$ such that\ $\sum
_{i=1}^{k}\beta_{i}v_{i}\geq0,\  \sum_{i=1}^{k}\beta_{i}p_{i}=0,\  \sum
_{i=1}^{k}\beta_{i}X_{i}\leq0.$

Then a similar domination also holds for the solutions: If the sum of initial
values $\sum_{i=1}^{k}\beta_{i}u_{i}(0,\cdot)$ is a non-positive and
continuous function on $\mathbb{R}^{N}$, then $\sum_{i=1}^{k}\beta_{i}%
u_{i}(t,\cdot)\leq0$, for all $t>0$.
\end{theorem}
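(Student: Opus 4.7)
The plan is a standard weighted viscosity comparison argument via doubling of variables and the Crandall--Ishii--Lions jet theorem (Theorem \ref{Thm-8.3}). First I reduce to $\beta_i\equiv 1$: setting $\tilde u_i:=\beta_i u_i$ and $\tilde G_i(t,x,v,p,X):=\beta_i G_i(t,x,v/\beta_i,p/\beta_i,X/\beta_i)$, each $\tilde u_i$ is a subsolution of $\partial_t\tilde u-\tilde G_i=0$, the domination (\ref{domi-cond}) rewrites into the unweighted form $\sum\tilde v_i\ge 0,\ \sum p_i=0,\ \sum X_i\le 0\Rightarrow\sum\tilde G_i\le 0$, and hypothesis (i) is inherited with rescaled moduli, so I may assume $\beta_i\equiv 1$ throughout.

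Argue by contradiction: suppose $\sum\tilde u_i(t_0,x_0)>0$ at some $(t_0,x_0)\in(0,T]\times\mathbb R^N$. For small $\epsilon>0$ and large $\alpha>0$, introduce on $[0,T)\times(\mathbb R^N)^k$
\[
\Phi_\alpha(t,x_1,\ldots,x_k):=\sum_i\tilde u_i(t,x_i)-\tfrac{\alpha}{2}\sum_{i<j}|x_i-x_j|^2-\tfrac{\epsilon}{T-t}.
\]
The decay $(\sum\tilde u_i)^+\to 0$ at infinity together with upper semicontinuity supply a global maximiser $(\hat t,\hat x_{1},\ldots,\hat x_{k})$ (depending on $\alpha$); evaluating on the diagonal yields $\sup\Phi_\alpha>0$ for $\epsilon$ small, the initial condition $\sum\tilde u_i(0,\cdot)\le 0$ together with the time penalty force $\hat t>0$, and the singular term forces $\hat t<T$. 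The classical concentration lemma for penalised maximisers then gives, along a subsequence as $\alpha\to\infty$, a common limit $\hat x_{i,\alpha}\to\hat x$ with $\alpha|\hat x_{i,\alpha}-\hat x_{j,\alpha}|^2\to 0$.

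Applying Theorem \ref{Thm-8.3} with test function $\varphi:=\tfrac{\alpha}{2}\sum_{i<j}|x_i-x_j|^2+\epsilon/(T-t)$, for each auxiliary parameter $\delta>0$ I extract jets $(b_i,p_i,X_i)\in\overline{\mathcal P}^{2,+}\tilde u_i(\hat t,\hat x_i)$ with $p_i=\alpha\sum_{j\neq i}(\hat x_i-\hat x_j)$, $\sum_ip_i=0$ by antisymmetry, $\sum_ib_i=\epsilon/(T-\hat t)^2>0$, and $\mathrm{diag}(X_i)\le A+\delta A^2$ where $A=D^2\varphi=\alpha(k\,I_{kN}-J)$ with $J$ the $kN\times kN$ block matrix of $N\times N$ identities. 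The constant vectors $\eta=(v,\ldots,v)$ span $\ker A$, so
\[
\sum_i\langle X_iv,v\rangle\;\le\;\langle(A+\delta A^2)\eta,\eta\rangle\;=\;0\qquad\forall v\in\mathbb R^N,
\]
i.e.\ $\sum X_i\le 0$. The subsolution property yields $b_i\le\tilde G_i(\hat t,\hat x_i,\tilde u_i(\hat t,\hat x_i),p_i,X_i)$; summing, using (i) to move every $\hat x_i$ to the common point $\hat x$ (the modulus $\omega(|\hat x_i-\hat x|+|p_i||\hat x_i-\hat x|)$ vanishes because $|p_i||\hat x_i-\hat x|\le C\alpha\sum_j|\hat x_i-\hat x_j|^2\to 0$), and applying the domination condition at $(\hat t,\hat x)$---valid since $\sum\tilde u_i(\hat t,\hat x_i)>\epsilon/(T-\hat t)>0$, $\sum p_i=0$, $\sum X_i\le 0$---yields $\sum_ib_i\le o(1)$, contradicting $\sum_ib_i=\epsilon/(T-\hat t)^2>0$.

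The technical heart of the proof is the concentration estimate $\alpha|\hat x_{i,\alpha}-\hat x_{j,\alpha}|^2\to 0$, which is exactly what kills the factor $|p||x-y|$ appearing in the continuity modulus (i); and the matrix inequality $\sum X_i\le 0$, which after the preliminary rescaling is the transparent observation that the diagonal direction lies in $\ker D^2\varphi$. The remaining bookkeeping---existence of interior maximisers, extraction of subsequences, and passage to the limit---is routine viscosity theory.
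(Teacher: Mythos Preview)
Your argument is correct and follows the same doubling-of-variables/Crandall--Ishii--Lions route as the paper's own proof; the differences are cosmetic. The paper keeps the weights $\beta_i$ throughout rather than rescaling to $\beta_i\equiv 1$, absorbs the time penalty $\bar\delta/(T-t)$ into each $u_i$ beforehand (obtaining strict subsolutions with margin $-c$) rather than carrying it in the auxiliary function $\Phi_\alpha$, and uses the chain penalty $\tfrac{\alpha}{2}\sum_{i=1}^{k-1}|x_{i+1}-x_i|^2$ instead of your complete-graph penalty $\tfrac{\alpha}{2}\sum_{i<j}|x_i-x_j|^2$; in each case the resulting identities $\sum\beta_ip_i=0$, $\sum\beta_iX_i\le 0$ and the concentration estimate $\alpha|\hat x_i-\hat x_j|^2\to 0$ go through identically, and the contradiction is the same (you compare $\sum b_i=\epsilon/(T-\hat t)^2>0$ with $o(1)$, the paper compares $-kc<0$ with $o(1)$). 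One small imprecision: when you ``move every $\hat x_i$ to the common point $\hat x$'' you should take $\hat x$ to be one of the $\hat x_{j,\alpha}$ (the paper uses $\hat x_{1,\alpha}$), not the subsequential limit, since the modulus estimate must be applied at fixed $\alpha$ before letting $\alpha\to\infty$.
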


\begin{proof}
\medskip \medskip We first observe that for $\bar{\delta}>0$ and for each
$1\leq i\leq k$, the functions defined by $\tilde{u}_{i}:=u_{i}-\bar{\delta
}/(T-t)$ is a subsolution of:%
\[
\partial_{t}\tilde{u}_{i}-\tilde{G}_{i}(t,x,\tilde{u}_{i}+\bar{\delta
}/(T-t),D\tilde{u}_{i},D^{2}\tilde{u}_{i})\leq-\frac{\bar{\delta}}{(T-t)^{2}}%
\]
where $\tilde{G}_{i}(t,x,v,p,X):=G_{i}(t,x,v+\bar{\delta}/(T-t),p,X)$. It is
easy to check that the functions $\tilde{G}_{i}$ satisfy the same conditions
as $G_{i}$. Since $\sum_{i=1}^{k}\beta_{i}u_{i}\leq0$ follows from $\sum
_{i=2}^{k}\beta_{i}\tilde{u}_{i}\leq0$ in the limit $\bar{\delta}\downarrow0$,
it suffices to prove the theorem under the additional assumptions%
\begin{equation}%
\begin{array}
[c]{c}%
\partial_{t}u_{i}-G_{i}(t,x,u_{i},Du_{i},D^{2}u_{i})\leq-c,\  \  \ c:=\bar
{\delta}/T^{2},\\
\  \  \  \  \  \  \  \  \  \  \  \  \  \text{and }\lim_{t\rightarrow T}u_{i}(t,x)=-\infty
,\  \  \  \  \text{uniformly in }[0,T)\times \mathbb{R}^{N}.
\end{array}
\label{ineq-c}%
\end{equation}
\newline To prove the theorem, we assume to the contrary that
\[
\sup_{(s,x)\in \lbrack0,T)\times \mathbb{R}^{N}}\sum_{i=1}^{k}\beta_{i}%
u_{i}(t,x)=m_{0}>0
\]
We will apply Theorem \ref{Thm-8.3} for $x=(x_{1},\cdots,x_{k})\in
\mathbb{R}^{k\times N}$ and%
\[
w(t,x):=\sum_{i=1}^{k}\beta_{i}u_{i}(t,x_{i}),\  \  \varphi(x)=\varphi_{\alpha
}(x):=\frac{\alpha}{2}\sum_{i=1}^{k-1}|x_{i+1}-x_{i}|^{2}.
\]
For each large $\alpha>0$ the maximum of $w-\varphi_{\alpha}$ achieved at some
$(t^{\alpha},x^{\alpha})$ inside a compact subset of $[0,T)\times
\mathbb{R}^{k\times N}$. Indeed, since
\[
M_{\alpha}=\sum_{i=1}^{k}\beta_{i}u_{i}(t^{\alpha},x_{i}^{\alpha}%
)-\varphi_{\alpha}(x^{\alpha})\geq m_{0},
\]
thus $t^{\alpha}$ must be inside an interval $[0,T_{0}]$, $T_{0}<T$ and
$x^{\alpha}$ must be inside a compact set $\{x\in \times \mathbb{R}^{k\times
N}:\sup_{t\in \lbrack0,T_{0}]}w(t,x)\geq \frac{m_{0}}{2}\}$. We can check that
(see \cite{CIL} Lemma 3.1)
\begin{equation}
\left \{
\begin{array}
[c]{l}%
\text{(i) }\lim_{\alpha \rightarrow \infty}\varphi_{\alpha}(x^{\alpha
})=0\text{.}\\
\text{(ii)\ }\lim_{\alpha \rightarrow \infty}M_{\alpha}=\lim_{\alpha
\rightarrow \infty}\beta_{1}u_{1}(t^{\alpha},x_{1}^{\alpha})+\cdots+\beta
_{k}u_{k}(t^{\alpha},x_{k}^{\alpha}))\\
\; \; \; \; \; \; \; \;=\sup_{(t,x)\in \lbrack0,T)\times \mathbb{R}^{N}}[\beta_{1}%
u_{1}(t,x)+\cdots+\beta_{k}u_{k}(t,x)]\\
\  \  \  \  \  \  \  \  \  \  \  \ =[\beta_{1}u_{1}(\hat{t},\hat{x})+\cdots+\beta
_{k}u_{k}(\hat{t},\hat{x})]=m_{0}.
\end{array}
\right.  \label{limit}%
\end{equation}
where $(\hat{t},\hat{x})$ is a limit point of $(t^{\alpha},x_{1}^{\alpha})$.
Since $u_{i}\in \mathrm{USC}$, for sufficiently large $\alpha$, we have
\[
\beta_{1}u_{1}(t^{\alpha},x_{1}^{\alpha})+\cdots+\beta_{k}u_{k}(t^{\alpha
},x_{k}^{\alpha})\geq \frac{m_{0}}{2}.
\]
If $\hat{t}=0$, we have $\limsup_{\alpha \rightarrow \infty}\sum_{i=1}^{k}%
\beta_{i}u_{i}(t^{\alpha},x_{i}^{\alpha})=\sum_{i=1}^{k}\beta_{i}u_{i}%
(0,\hat{x})\leq0$. We know that $\hat{t}>0$ and thus $t^{\alpha}$ must be
strictly positive for large $\alpha$. It follows from Theorem \ref{Thm-8.3}
that, for each $\varepsilon>0$ there exists $b_{i}^{\alpha}\in \mathbb{R}$,
$X_{i}\in \mathbb{S}(N)$ such that
\begin{equation}
(b_{i}^{\alpha},D_{x_{i}}\varphi(x^{\alpha}),X_{i})\in \bar{J}_{Q_{i}}%
^{2;+}(u_{i})(t^{\alpha},x_{i}^{\alpha}),\  \  \sum_{i=1}^{k}\beta_{i}%
b_{i}^{\alpha}=0,\; \text{for }i=1,\cdots,k,\label{b-p-eqn}%
\end{equation}
and such that
\begin{equation}
-(\frac{1}{\varepsilon}+\left \Vert A\right \Vert )I\leq \left(
\begin{array}
[c]{cccc}%
\beta_{1}X_{1} & \ldots & 0 & 0\\
\vdots & \ddots & \vdots & \vdots \\
0 & \ldots & \beta_{k-1}X_{k-1} & 0\\
0 & \ldots & 0 & \beta_{k}X_{k}%
\end{array}
\right)  \leq A+\varepsilon A^{2},\label{ine-matrix}%
\end{equation}
where $A=D^{2}\varphi_{\alpha}(x^{\alpha})\in \mathbb{S}(kN)$ is explicitly
given by%
\[
A=\alpha J_{kN},\; \text{where}\;J_{kN}=\left(
\begin{array}
[c]{cccc}%
I_{N} & -I_{N}\ 0\ldots & 0 & -I_{N}\\
\vdots & \ddots & \vdots & \vdots \\
0 & 0\ldots-I_{N} & I_{N} & -I_{N}\\
-I_{N} & 0\ldots0 & -I_{N} & I_{N}%
\end{array}
\right)  .
\]
The second inequality of (\ref{ine-matrix}) implies $\sum_{i=1}^{k}\beta
_{i}X_{i}\leq0$. Setting
\begin{align*}
p_{1}^{\alpha} &  =D_{x_{1}}\varphi_{\alpha}(x^{\alpha})=\beta_{1}^{-1}%
\alpha(2x_{1}^{\alpha}-x_{3}^{\alpha}-x_{2}^{\alpha}),\; \\
&  \  \  \vdots \\
p_{k}^{\alpha} &  =D_{x_{k}}\varphi_{\alpha}(x^{\alpha})=\beta_{k}^{-1}%
\alpha(2x_{k}^{\alpha}-x_{k-1}^{\alpha}-x_{1}^{\alpha}).
\end{align*}
Thus $\sum_{i=1}^{k}\beta_{i}p_{i}^{\alpha}=0$. This with (\ref{b-p-eqn}) and
(\ref{ineq-c}) it follows that
\[
b_{i}^{\alpha}-G_{i}(t^{\alpha},x_{i}^{\alpha},u_{i}(t^{\alpha},x_{i}^{\alpha
}),p_{i}^{\alpha},X_{i})\leq-c,\  \ i=1,\cdots,k.\
\]
By (\ref{limit})-(i) we also have $\lim_{\alpha \rightarrow \infty}%
|p_{i}^{\alpha}|\cdot|x_{i}^{\alpha}-x_{1}^{\alpha}|\rightarrow0$. This,
together with the the domination condition (\ref{domi-cond}) of $G_{i}$,
implies
\begin{align*}
-kc &  =-\sum_{i=1}^{k}\beta_{i}b_{i}^{\alpha}-kc\geq-\sum_{i=1}^{k}\beta
_{i}G_{i}(t^{\alpha},x_{i}^{\alpha},u_{i}(t^{\alpha},x_{i}^{\alpha}%
),p_{i}^{\alpha},X_{i})\\
&  \geq-\sum_{i=1}^{k}\beta_{i}G_{i}(t^{\alpha},x_{1}^{\alpha},u_{i}%
(t^{\alpha},x_{i}^{\alpha}),p_{i}^{\alpha},X_{i})\\
&  \  \  \  \  \ -\sum_{i=1}^{k}\beta_{i}[G_{i}(t^{\alpha},x_{i}^{\alpha}%
,u_{i}(t^{\alpha},x_{i}^{\alpha}),p_{i}^{\alpha},X_{i})-G_{i}(t^{\alpha}%
,x_{1}^{\alpha},u_{i}(t^{\alpha},x_{i}^{\alpha}),p_{i}^{\alpha},X_{i})]^{-}\\
&  \geq-\sum_{i=1}^{k}\beta_{i}\bar{\omega}(1+(T-T_{0})^{-1}+|x_{1}^{\alpha
}|+|x_{i}^{\alpha}|+|u_{i}(t^{\alpha},x_{i}^{\alpha})|)\omega(|x_{i}^{\alpha
}-x_{1}^{\alpha}|+|p_{i}^{\alpha}|\cdot|x_{i}^{\alpha}-x_{1}^{\alpha}|)\  \
\end{align*}
The right side tends to zero as $\alpha \rightarrow \infty$, which induces a
contradiction. The proof is complete.
\end{proof}

\begin{theorem}
\textbf{\label{Thm-dom}}(Domination Theorem) Let polynomial growth functions
$u_{i}\in$USC$([0,T]\times \mathbb{R}^{N})$ be subsolutions of
\begin{equation}
\partial_{t}u-G_{i}(u,Du,D^{2}u)=0,\  \  \  \ i=1,\cdots,k, \label{visPDE}%
\end{equation}
on $(0,T)\times \mathbb{R}^{N}$. We assume that $G_{i}:\mathbb{R}%
\times \mathbb{R}^{N}\times \mathbb{S(N})\mapsto \mathbb{R}$, $i=1,\cdots,k$, are
given continuous functions satisfying the following conditions: There exists a
positive constant $C$, such that\newline{\normalsize (i)} for all $\lambda
\geq0$
\[
\lambda G_{i}(v,p,Y)\leq CG_{i}(\lambda v,\lambda p,\lambda Y);\
\]
{\normalsize (ii)} Lipschitz condition:
\begin{align*}
|G_{i}(v_{1},p,X)-G_{i}(v_{2},q,Y)|  &  \leq C(|v_{1}-v_{2}|+|p-q|+\left \Vert
X-Y\right \Vert ),\  \\
\forall v_{1},v_{2}  &  \in \mathbb{R},\  \forall p,q\in \mathbb{R}^{N}\text{
\ and }X,Y\in \mathbb{S}(N),
\end{align*}
{\normalsize (iii)} domination condition for $G_{i}$: for fixed constants
$\beta_{i}>0$, $i=1,\cdots,k$,%
\begin{align*}
\sum_{i=1}^{k}\beta_{i}G_{i}(v_{i},p_{i},X_{i})  &  \leq0,\  \  \text{for all
}v_{i}\in \mathbb{R},\ p_{i}\in \mathbb{R}^{N},\text{ }X_{i}\in \mathbb{S}(N),\\
\  \text{such that }\sum_{i=1}^{k}\beta_{i}v_{i}  &  \geq0,\text{ }\sum
_{i=1}^{k}\beta_{i}p_{i}=0,\  \sum_{i=1}^{k}\beta_{i}X_{i}\leq0.
\end{align*}

Then the following domination holds: If $\sum_{i=1}^{k}\beta_{i}u_{i}%
(0,\cdot)$ is a non-positive continuous function, then we have
\[
\sum_{i=1}^{k}\beta_{i}u_{i}(t,x)\leq0,\  \  \  \forall(t,x)\in(0,T)\times
\mathbb{R}^{N}.
\]

\end{theorem}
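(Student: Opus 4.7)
\textbf{Proof plan for Theorem \ref{Thm-dom}.} The strategy is to reduce to the previous domination theorem (Theorem \ref{Thm-dom1}) by subtracting a rapidly growing auxiliary function so as to enforce the decay-at-infinity hypothesis required there. Choose $k \in \mathbb{N}$ such that $|u_i(t,x)| \le M(1+|x|^k)$ for every $i$ and $(t,x) \in [0,T]\times\mathbb{R}^N$, fix an integer $m > k$, and set
\[
\psi(t,x) := e^{Lt}(1+|x|^2)^{m/2}, \qquad u_i^{\varepsilon} := u_i - \varepsilon\psi,
\]
where $L>0$ is to be chosen and $\varepsilon>0$ is small. A direct computation yields $\partial_t\psi = L\psi$, while $|D\psi| \le m\psi$ and $\|D^2\psi\| \le C_m\psi$ for a constant $C_m$ depending only on $m$.

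The key step is to show that $u_i^{\varepsilon}$ is still a viscosity subsolution of $\partial_t u - G_i(u, Du, D^2u) = 0$. Suppose $\phi \in C^{1,2}$ touches $u_i^{\varepsilon}$ from above at $(t_0,x_0)$; then $\phi + \varepsilon\psi$ touches $u_i$ from above there, so
\[
\partial_t\phi + \varepsilon\partial_t\psi \le G_i\bigl(u_i^{\varepsilon} + \varepsilon\psi,\, D\phi + \varepsilon D\psi,\, D^2\phi + \varepsilon D^2\psi\bigr)(t_0,x_0),
\]
and the Lipschitz assumption (ii) gives
\[
\partial_t\phi \le G_i(u_i^{\varepsilon}, D\phi, D^2\phi) + \varepsilon\bigl[C(\psi + |D\psi| + \|D^2\psi\|) - \partial_t\psi\bigr].
\]
Choosing $L \ge C(1 + m + C_m)$ makes the bracketed term non-positive, so $u_i^{\varepsilon}$ is a subsolution as desired. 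This step is the main technical hurdle; the other steps amount to bookkeeping.

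Next, verify the hypotheses of Theorem \ref{Thm-dom1} for the family $\{u_i^{\varepsilon}\}$. Since $G_i$ here does not depend on $(t,x)$, the continuity condition (i) of Theorem \ref{Thm-dom1} is trivial, while its domination condition (ii) is exactly assumption (iii) of Theorem \ref{Thm-dom}. Because $m > k$, we have
\[
\sum_{i=1}^{k}\beta_i u_i^{\varepsilon}(t,x) \le M\Bigl(\sum_i \beta_i\Bigr)(1+|x|^k) - \varepsilon\Bigl(\sum_i\beta_i\Bigr) e^{Lt}(1+|x|^2)^{m/2},
\]
whose positive part vanishes outside a bounded set, uniformly in $t \in [0,T]$ as $|x| \to \infty$. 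Finally, $\sum_i\beta_i u_i^{\varepsilon}(0,\cdot)$ is a non-positive continuous function since $\sum_i\beta_i u_i(0,\cdot) \le 0$ and $\psi(0,\cdot) > 0$.

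Applying Theorem \ref{Thm-dom1} therefore yields $\sum_{i=1}^{k}\beta_i u_i^{\varepsilon}(t,x) \le 0$ for every $(t,x) \in (0,T)\times\mathbb{R}^N$. Rearranging and letting $\varepsilon \downarrow 0$,
\[
\sum_{i=1}^{k}\beta_i u_i(t,x) \le \varepsilon\Bigl(\sum_i\beta_i\Bigr) e^{LT}(1+|x|^2)^{m/2} \longrightarrow 0,
\]
which gives the desired conclusion. The only delicate point is the subsolution-preservation step above; condition (i) is not explicitly invoked in this argument, though it enters ancillary uses of $G_i$ elsewhere in the paper.
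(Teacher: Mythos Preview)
Your argument is correct, and it reaches the conclusion by a route genuinely different from the paper's. The paper performs a \emph{multiplicative} rescaling
\[
\tilde{u}_{i}(t,x):=u_{i}(t,x)\,e^{\lambda t}\,(1+|x|^{2})^{-l/2},
\]
which forces $\tilde{u}_{i}\to 0$ at infinity but turns the operator into an $x$-dependent $\tilde{G}_{i}$; showing that $\tilde{u}_{i}$ is a subsolution of this new equation is exactly where hypothesis~(i) ($\lambda G_{i}(v,p,Y)\le C\,G_{i}(\lambda v,\lambda p,\lambda Y)$) is used, and the $x$-dependence then requires the continuity modulus in Theorem~\ref{Thm-dom1}(i) to be checked via the Lipschitz bound. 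Your \emph{additive} perturbation $u_{i}^{\varepsilon}=u_{i}-\varepsilon\psi$ leaves $G_{i}$ untouched, so Theorem~\ref{Thm-dom1}(i) is vacuous and only the Lipschitz condition~(ii) is needed to preserve the subsolution property; hypothesis~(i) of the present theorem is indeed never used. In this sense your proof is slightly more economical. Two minor points: you reuse the symbol $k$ for the polynomial growth exponent while it already denotes the number of functions $u_{1},\dots,u_{k}$, and you should perhaps remark that each $u_{i}^{\varepsilon}$ (not just the weighted sum) satisfies the decay hypothesis of Theorem~\ref{Thm-dom1}, since $m$ exceeds the growth exponent of every $u_{i}$.
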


\begin{proof}
We set $\xi(x):=(1+|x|^{2})^{l/2}$ and
\[
\tilde{u}_{i}(t,x):=u_{i}(t,x)e^{\lambda t}\xi^{-1}(x),\ i=1,\cdots,k,
\]
where $l$ is chosen large enough so that $\sum_{i=1}^{k}|\tilde{u}%
_{i}(t,x)|\rightarrow0$ uniformly. From condition (i) it is easy to check that
for each $i=1,\cdots$, $\tilde{u}_{i}$ is a subsolution of
\begin{equation}
\partial_{t}\tilde{u}_{i}-\tilde{G}_{i}(x,\tilde{u}_{i},D\tilde{u}_{i}%
,D^{2}\tilde{u}_{i})=0, \label{subF}%
\end{equation}
where
\begin{align*}
&  \tilde{G}_{i}(x,v,p,X):=-\lambda v\\
&  +e^{\lambda t}CG_{i}(v,p+v\eta(x),X+p\otimes \eta(x)+\eta(x)\otimes
p+v\kappa(x)).
\end{align*}
Here
\begin{align*}
\eta(x)  &  :=\xi^{-1}(x)D\xi(x)=k(1+|x|^{2})^{-1}x,\  \  \  \  \\
\kappa(x)  &  :=\xi^{-1}(x)D^{2}\xi(x)=k(1+|x|^{2})^{-1}I-k(k-2)(1+|x|^{2}%
)^{-2}x\otimes x.
\end{align*}
Since $\eta$ and $\kappa$ are uniformly bounded, one can choose a fixed but
large enough $\lambda>0$ such that $\tilde{G}_{i}(x,v,p,X)$ satisfies all
conditions of $G_{i}$, $i=1,\cdots,k$ in Theorem \ref{Thm-dom1}. The proof is
complete by directly applying this theorem.
\end{proof}

We have the following Corollaries which are basic in this paper:

\begin{corollary}
\label{Comparison}(Comparison Theorem) Let $F_{1},F_{2}:\mathbb{R}^{N}%
\times \mathbb{S}(N)\mapsto \mathbb{R}$ be given functions satisfying similar
conditions (i) and (ii) of Theorem \ref{Thm-dom}. We also assume that, for
each $p,q\in \mathbb{R}^{N}$ and $X$, $Y\in \mathbb{S}(N)$ such that $X\geq Y$,
we have
\[
F_{1}(p,X)\geq F_{2}(p,Y).
\]
Let $v_{i}\in$\textrm{LSC}$((0,T)\times \mathbb{R}^{N})$, $i=1,2$, be
respectively a viscosity supersolution of $\partial_{t}v-F_{i}(Dv,D^{2}v)=0$
such that $v_{1}(0,\cdot)-v_{2}(0,\cdot)$ is a non-negative continuous
function. Then we have $v_{1}(t,x)-v_{2}(t,x)\geq0$ for all $(t,x)\in
\lbrack0,\infty)\times \mathbb{R}^{N}$.
\end{corollary}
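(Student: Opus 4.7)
The plan is to derive this corollary as a direct application of the Domination Theorem (Theorem \ref{Thm-dom}) with $k=2$, after the sign-reversal substitution
\[
u_1 := v_2, \qquad u_2 := -v_1.
\]
With $\beta_1 = \beta_2 = 1$, the Domination Theorem's conclusion $u_1 + u_2 \leq 0$ is exactly the target comparison $v_1 \geq v_2$, so the heart of the proof is to set up the two auxiliary PDEs and verify the hypotheses of Theorem \ref{Thm-dom} under this substitution.

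The first step is to identify the parabolic equations that $u_1$ and $u_2$ solve against. Using the routine fact that if $v$ is a viscosity supersolution of $\partial_t v - F(Dv, D^2 v) = 0$ then $-v$ is a USC viscosity subsolution of $\partial_t u - \tilde F(Du, D^2 u) = 0$ with $\tilde F(p, X) := -F(-p, -X)$, one obtains $u_1$ as a subsolution with $G_1 := F_2$ and $u_2$ as a subsolution with $G_2(p, X) := -F_1(-p, -X)$. Conditions (i) (positive homogeneity) and (ii) (Lipschitz) of Theorem \ref{Thm-dom} transfer from $F_1, F_2$ to $G_1, G_2$ in a routine way: $G_2$ inherits positive homogeneity because $F_1(\lambda(-p), \lambda(-X)) = \lambda F_1(-p, -X)$ for $\lambda \geq 0$, and the Lipschitz estimate passes through the sign change unchanged.

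The core computation is the verification of the domination condition (iii). Given $p_1 + p_2 = 0$ and $X_1 + X_2 \leq 0$, so that $-X_2 \geq X_1$, one writes
\[
G_1(p_1, X_1) + G_2(p_2, X_2) = F_2(p_1, X_1) - F_1(-p_2, -X_2) = F_2(p_1, X_1) - F_1(p_1, -X_2) \leq 0,
\]
where the last inequality is precisely the corollary's monotonicity hypothesis $F_1(p, Y) \geq F_2(p, X)$ for $Y \geq X$, applied with $Y = -X_2$ and $X = X_1$. The initial-data assumption $v_1(0,\cdot) - v_2(0,\cdot) \geq 0$ translates directly to $u_1(0,\cdot) + u_2(0,\cdot) \leq 0$, so all the hypotheses of Theorem \ref{Thm-dom} are in place and the theorem yields $v_2(t,x) - v_1(t,x) = u_1(t,x) + u_2(t,x) \leq 0$ for all $(t,x)$.

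The only real obstacle is careful bookkeeping of the sign reversals through the substitution so that the hypotheses of Theorem \ref{Thm-dom} are checked with the correctly transformed $G_1, G_2$ and weights $\beta_i$; once the choice $u_1 = v_2$, $u_2 = -v_1$ is identified, no further analytic machinery is required beyond the already-proved Domination Theorem.
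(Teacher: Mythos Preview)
Your proposal is correct and is essentially the paper's own argument with the labels swapped: the paper takes $u_{1}:=-v_{1}$, $G_{1}(p,X):=-F_{1}(-p,-X)$, $G_{2}:=F_{2}$, $\beta_{1}=\beta_{2}=1$, and verifies the identical domination inequality $G_{1}(p_{1},X_{1})+G_{2}(p_{2},X_{2})=F_{2}(p_{2},X_{2})-F_{1}(p_{2},-X_{1})\leq 0$ before invoking Theorem~\ref{Thm-dom}. (Both your argument and the paper's implicitly use that $v_{2}$ is a \emph{subsolution} so that $u=v_{2}$ is admissible in Theorem~\ref{Thm-dom}; the statement's ``$v_{2}\in\mathrm{LSC}$ supersolution'' is evidently a typo, since a comparison between two supersolutions cannot hold in general.)
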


\begin{proof}
We set $\beta_{1}=\beta_{2}=1$, $G_{1}(p,X):=-F_{1}(-p,-X)$ and $G_{2}%
=F_{2}(p,X)$. It is observed that $u_{1}:=-v_{1}\in$\textrm{USC}%
$((0,T)\times \mathbb{R}^{N})$ is a viscosity subsolution of $\partial
_{t}u-G_{1}(Du,D^{2}u)=0$. For each $p_{1}$, $p_{2}\in \mathbb{R}^{N}$ and
$X_{1}$, $X_{2}\in \mathbb{S}(N)$ such that $p_{1}+p_{2}=0$ and $X_{1}%
+X_{2}\leq0$, we also have
\[
G_{1}(p_{1},X_{1})+G_{2}(p_{2},X_{2})=F_{2}(p_{2},X_{2})-F_{1}(p_{2}%
,-X_{1})\leq0
\]
We thus can apply Theorem \ref{Thm-dom} to get $-u_{1}+u_{2}\leq0$. The proof
is complete.
\end{proof}

\begin{corollary}
\label{Domination}(Domination Theorem) Let $F_{i}:\mathbb{R}^{N}%
\times \mathbb{S}(N)\mapsto \mathbb{R}$, $i=0,1$, given functions satisfying
similar conditions (i) and (ii) of Theorem \ref{Thm-dom}. Let $v_{i}\in
$\textrm{LSC}$((0,T)\times \mathbb{R}^{N})$ be viscosity supersolutions of
$\partial_{t}v-F_{i}(Dv,D^{2}v)=0$ respectively for $i=0,1$ and let $v_{2}\in
$\textrm{USC}$((0,T)\times \mathbb{R}^{N})$ be a viscosity subsolution of
$\partial_{t}v-F_{1}(Dv,D^{2}v)=0$. We assume that:
\begin{align*}
F_{1}(p,X)-F_{1}(q,Y)  &  \leq F_{0}(p-q,Z),\  \\
\  \forall p,q  &  \in \mathbb{R}^{N},\ X,Y,Z\text{ }\in \mathbb{S}%
(N)\  \text{such that }X-Y\leq Z.\
\end{align*}
Then the following domination holds: If $v_{0}(0,\cdot)+v_{1}(0,\cdot
)-v_{2}(0,\cdot)$ is a continuous and non-negative function then
$v_{0}(t,\cdot)+v_{1}(t,\cdot)-v_{2}(t,\cdot)\geq0$ for all $t>0$.
\end{corollary}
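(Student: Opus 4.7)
The plan is to reduce the desired bound $v_0+v_1-v_2\geq 0$ to an instance of the three-function Domination Theorem (Theorem \ref{Thm-dom}) with weights $\beta_1=\beta_2=\beta_3=1$. First I would set $u_1:=-v_0$, $u_2:=-v_1$, and $u_3:=v_2$, so that the target inequality becomes the symmetric statement $u_1+u_2+u_3\leq 0$, and the initial-time hypothesis translates directly into the non-positivity of $u_1(0,\cdot)+u_2(0,\cdot)+u_3(0,\cdot)$ as a continuous function on $\mathbb{R}^N$.

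Next I would verify that each $u_i$ is a viscosity subsolution of a parabolic equation of the form $\partial_t u-G_i(Du,D^2u)=0$. Flipping signs converts supersolutions into subsolutions with the reflected nonlinearity, so the natural choices are $G_1(p,X):=-F_0(-p,-X)$, $G_2(p,X):=-F_1(-p,-X)$, and $G_3:=F_1$. Conditions (i) and (ii) of Theorem \ref{Thm-dom} are preserved under this sign reflection (the Lipschitz constants and the homogeneity-type bound carry over verbatim), so the three $G_i$ inherit the hypotheses assumed on $F_0$ and $F_1$.

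The crux of the argument is verifying the domination condition (iii). Given $p_1,p_2,p_3\in\mathbb{R}^N$ and $X_1,X_2,X_3\in\mathbb{S}(N)$ with $p_1+p_2+p_3=0$ and $X_1+X_2+X_3\leq 0$, the required inequality $\sum_i G_i(p_i,X_i)\leq 0$ unpacks to
\[
F_1(p_3,X_3)-F_1(-p_2,-X_2)\;\leq\;F_0(-p_1,-X_1).
\]
But $p_3-(-p_2)=p_3+p_2=-p_1$ and $X_3-(-X_2)=X_2+X_3\leq -X_1$, so applying the structural hypothesis of the corollary with $p=p_3$, $q=-p_2$, $X=X_3$, $Y=-X_2$, and $Z=-X_1$ produces exactly this bound. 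Hence (iii) is satisfied.

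With all hypotheses of Theorem \ref{Thm-dom} in place, its conclusion gives $u_1+u_2+u_3\leq 0$ on $(0,T)\times\mathbb{R}^N$, which is the desired domination. I do not expect any genuine obstacle beyond keeping the sign conventions straight: the asymmetry between the two supersolutions and one subsolution must be absorbed into the definitions of the $G_i$ so that the symmetric subsolution framework of Theorem \ref{Thm-dom} applies, and the comparison inequality assumed between $F_1$ and $F_0$ is tailor-made to deliver condition (iii) once this bookkeeping is carried out.
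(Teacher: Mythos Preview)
Your proposal is correct and matches the paper's own proof essentially line for line: the paper likewise sets $u_i=-v_i$ for $i=0,1$ and $u_2=v_2$, introduces $G_i(p,X)=-F_i(-p,-X)$ for $i=0,1$ and $G_2=F_1$, and verifies the domination condition (iii) by exactly the same substitution into the structural hypothesis $F_1(p,X)-F_1(q,Y)\leq F_0(p-q,Z)$ before invoking Theorem~\ref{Thm-dom} with $\beta_i=1$. The only differences are cosmetic (your indices run over $1,2,3$ instead of $0,1,2$).
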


\begin{proof}
We denote
\[
G_{i}(p,X):=-F_{i}(-p,-X),\  \ i=0,1,\  \text{and }\  \ G_{2}(p,X):=F_{1}(p,X).
\]
Observe that $u_{i}=-v_{i}\in$\textrm{USC}$((0,T)\times \mathbb{R}^{N})$,
$i=0,1$, are viscosity subsolutions of $\partial_{t}u-G_{i}(Du,D^{2}u)=0$,
$i=0,1$. We thus have, for each $X_{0}+X_{1}+X_{2}\leq0$, $p_{0}+p_{1}%
+p_{2}=0$,
\begin{align*}
&  G_{0}(p_{0},X_{0})+G_{1}(p_{1},X_{1})+G_{2}(p_{2},X_{2})\\
&  =-F_{0}(-p_{0},-X_{0})-F_{1}(-p_{1},-X_{1})+F_{1}(p_{2},X_{2})\leq0.
\end{align*}
Theorem \ref{Thm-dom} can be applied, for the case $\beta_{i}=1$, to get $\sum
u_{i}\leq0$, or $v_{0}+v_{1}-v_{2}\geq0$.
\end{proof}

Another co-product of Theorem \ref{Thm-dom} is:

\begin{corollary}
(Concavity) Let $F:\mathbb{R}^{N}\times \mathbb{S}(N)\mapsto \mathbb{R}$ be a
given function satisfying similar conditions (i) and (ii) of Theorem
\ref{Thm-dom}. We assume that $F$ is monotone in $X$, i.e. $F(p,X)\geq F(p,Y)$
if $X\geq Y$, and that $F$ is concave: for each fixed $\alpha \in(0,1)$,
\[
\alpha F(p,X)+(1-\alpha)F(q,Y)\leq F(\alpha p+(1-\alpha)q,\alpha
X+(1-\alpha)Y),\  \  \forall p,q\in \mathbb{R}^{N},\ X,Y\in \mathbb{S}(N).\
\]
Let $v_{i}\in$\textrm{USC}$((0,T)\times \mathbb{R}^{N})$, $i=0,1$, be
respectively viscosity subsolutions of $\partial_{t}v-F(Dv,D^{2}v)=0$ and let
$v\in$\textrm{LSC}$((0,T)\times \mathbb{R}^{N})$ be viscosity supersolution of
$\partial_{t}v-F(Dv,D^{2}v)=0$ such that $\alpha v_{1}(0,\cdot)+(1-\alpha
)v_{2}(0,\cdot)-v(0,\cdot)$ is a non-positive continuous function. Then for
all $t\geq0$ $\alpha v_{1}(t,\cdot)+(1-\alpha)v_{2}(t,\cdot)-v(t,\cdot)\geq0$.
\end{corollary}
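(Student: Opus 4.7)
The natural plan is to recognize this corollary as a direct application of the Domination Theorem (Theorem~\ref{Thm-dom}) with $k=3$ subsolutions, after using the standard sign-flip trick that converts the supersolution $v$ into a subsolution of a conjugate equation. First I would set $u_0 := v_1$ and $u_1 := v_2$, both already USC viscosity subsolutions of $\partial_t u - F(Du,D^2u) = 0$, and set $u_2 := -v$, which is a USC viscosity subsolution of $\partial_t u - \tilde F(Du,D^2u) = 0$ with $\tilde F(p,X) := -F(-p,-X)$. I would then pick the weights $\beta_0 = \alpha$, $\beta_1 = 1-\alpha$, $\beta_2 = 1$, so that the assumption on initial data translates into the hypothesis of Theorem~\ref{Thm-dom}, namely $\sum_i \beta_i u_i(0,\cdot) \leq 0$ (reading the stated conclusion with the evident sign correction so that it matches the initial-data hypothesis).

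The core step is to verify the three-way domination condition required by Theorem~\ref{Thm-dom}. Suppose $p_0, p_1, p_2 \in \mathbb{R}^N$ and $X_0, X_1, X_2 \in \mathbb{S}(N)$ satisfy
\[
\alpha p_0 + (1-\alpha)p_1 + p_2 = 0, \qquad \alpha X_0 + (1-\alpha)X_1 + X_2 \leq 0.
\]
Substituting $\tilde F(p_2,X_2) = -F(-p_2,-X_2)$ and using the first equality to rewrite $-p_2 = \alpha p_0 + (1-\alpha)p_1$, the concavity of $F$ yields
\[
\alpha F(p_0,X_0) + (1-\alpha)F(p_1,X_1) \leq F\bigl(\alpha p_0 + (1-\alpha)p_1,\;\alpha X_0 + (1-\alpha)X_1\bigr).
\]
From $\alpha X_0 + (1-\alpha)X_1 \leq -X_2$ and monotonicity of $F$ in its second argument, the right-hand side is bounded above by $F(-p_2,-X_2)$. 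Cancelling this against $\tilde F(p_2,X_2) = -F(-p_2,-X_2)$ gives exactly the domination inequality
\[
\alpha F(p_0,X_0) + (1-\alpha)F(p_1,X_1) + \tilde F(p_2,X_2) \leq 0.
\]

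The remaining verifications are routine: the scaling condition (i) and Lipschitz condition (ii) of Theorem~\ref{Thm-dom} are given for $F$, and both are preserved under the transformation $F(p,X) \mapsto -F(-p,-X)$, so they hold for $\tilde F$ as well. Applying Theorem~\ref{Thm-dom} then immediately gives
\[
\alpha v_1(t,x) + (1-\alpha)v_2(t,x) - v(t,x) \leq 0 \qquad \forall\, (t,x) \in (0,T) \times \mathbb{R}^N,
\]
which is the assertion of the corollary. I do not anticipate any substantive obstacle: the entire content is that concavity and monotonicity of $F$ together encode precisely the algebraic compatibility needed to cast convex combinations of sub/supersolutions into the three-term domination framework of Theorem~\ref{Thm-dom}; once the right $G_i$ and $\beta_i$ are chosen, the verification reduces to a single line of inequalities.
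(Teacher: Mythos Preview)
Your proposal is correct and follows exactly the same route as the paper's own proof: set $u_1=v_1$, $u_2=v_2$, $u_3=-v$ with $G_1=G_2=F$, $G_3(p,X)=-F(-p,-X)$, weights $\beta_1=\alpha$, $\beta_2=1-\alpha$, $\beta_3=1$, verify the domination inequality via concavity and monotonicity of $F$, and invoke Theorem~\ref{Thm-dom}. You also correctly flagged the sign typo in the stated conclusion; the paper's proof itself ends with $\alpha v_1+(1-\alpha)v_2\leq v$, matching your version.
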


\begin{proof}
We set $\beta_{1}=\alpha$, $\beta_{2}=(1-\alpha)$, $\beta_{3}=1$ and denote
\[
G_{1}(p,X)=G_{2}(p,X):=F(p,X),\ G_{3}(p,X)=-F(-p,-X).
\]
Observe that $u_{i}=v_{i}\in$\textrm{USC}$((0,T)\times \mathbb{R}^{N})$,
$i=1,2$, are viscosity subsolutions of $\partial_{t}u-G_{i}(Du,D^{2}u)=0$,
$u_{3}=-v\in \mathrm{USC}$ is a viscosity subsolution of $\partial_{t}%
u-G_{3}(Du,D^{2}u)=0$. Since $F$ is concave, thus for each $p_{i}\in
\mathbb{R}^{N}$ and $X_{i}\in \mathbb{S}(N)$ such that $\beta_{1}X_{1}%
+\beta_{2}X_{2}+\beta_{3}X_{3}\leq0$, $\beta_{1}p_{1}+\beta_{2}p_{2}+\beta
_{3}p_{3}=0$, we have
\begin{align*}
\beta_{1}G_{1}(p_{1},X_{1})+\beta_{2}G_{2}(p_{2},X_{2})+\beta_{3}G_{3}%
(p_{3},X_{3})  &  \leq F(\beta_{1}p_{1}+\beta_{2}p_{2},\beta_{1}X_{1}%
+\beta_{2}X_{2})-F(-p_{3},-X_{3})\\
&  \leq F(-p_{3},\beta_{1}X_{1}+\beta_{2}X_{2})-F(-p_{3},-X_{3})\\
&  \leq0.
\end{align*}
Theorem \ref{Thm-dom} can be applied to prove that $\alpha v_{1}%
(t,\cdot)+(1-\alpha)v_{2}(t,\cdot)\leq v(t,\cdot)$. The proof is complete.
\end{proof}

\end{document}